\numberwithin{equation}{section}
\def\PP{\mathbb{P}}
\newcommand{\epsin}{\varepsilon_{\text{in}}}
\newcommand{\epsout}{\varepsilon_{\text{out}}}
\newtheorem{thm}{Theorem}
\newtheorem{cor}[thm]{Corollary}
\newtheorem{rk}[thm]{Remark}
\DeclareMathOperator*{\argmin}{argmin}
\title{Monte Carlo methods for linear and non-linear Poisson-Boltzmann equation\thanks{This work was carried out and financed within
  the framework of the WalkOnMars project of the CEMRACS 2013.}}
\author{Mireille Bossy\thanks{INRIA Sophia Antipolis -- M\'editerran\'ee, TOSCA project-team,  2004 route des Lucioles, BP.\ 93,  06902 Sophia Antipolis Cedex, France; mireille.bossy@inria.fr} 
\and Nicolas Champagnat\thanks{Universit\'e de Lorraine, Institut Elie Cartan de Lorraine, UMR 7502, Vand\oe uvre-l\`es-Nancy,
  F-54506, France; Nicolas.Champagnat@inria.fr, CNRS, Institut Elie Cartan de Lorraine, UMR 7502, Vand\oe uvre-l\`es-Nancy,
  F-54506, France, INRIA, TOSCA project-team,, Villers-l\`es-Nancy, F-54600, France} 
\and {H\'el\`ene Leman}\thanks{CMAP, Ecole Polytechnique, CNRS, route de Saclay, 91128 Palaiseau Cedex-France}
\and {Sylvain Maire}\thanks{Aix-Marseille Universit\'e, CNRS, ENSAM, SIS, UMR 7296, F-13397 Marseille, Universit\'e
  de Toulon, CNRS, LSIS, UMR 7296, F-83957 La Garde; maire@univ-tln.fr}
\and Laurent Violeau\thanks{INRIA Sophia Antipolis -- M\'editerran\'ee, TOSCA project-team,  2004 route des Lucioles, BP.\ 93,  06902 Sophia Antipolis Cedex, France}
\and {Mariette Yvinec}\thanks{INRIA Sophia Antipolis -- M\'editerran\'ee, GEOMETRICA  project-team, 2004 route des Lucioles, BP.\ 93,
  06902 Sophia Antipolis Cedex, France} 
  }
\begin{document}

\maketitle

\begin{abstract} The electrostatic potential in the neighborhood of a biomolecule can be computed thanks to the non-linear divergence-form elliptic Poisson-Boltzmann PDE. 
Dedicated Monte-Carlo methods have been developed   to solve its  linearized version (see e.g.\cite{bossy-champagnat-al-09},\cite{mascagni-simonov-04}). These algorithms combine {\it walk on spheres} techniques and appropriate replacements at the boundary of the molecule. 
In the first part of this article we compare  recent replacement methods for this linearized  equation on real size biomolecules, that also require  efficient computational geometry algorithms. We compare our results with the deterministic solver APBS. 
In the second part, we prove a new probabilistic interpretation of the nonlinear Poisson-Boltzmann PDE.  A Monte Carlo algorithm is also derived  and tested on a simple test case.
\end{abstract}

\section{Introduction}
The goal of this paper is to study Monte Carlo methods to solve both linear and nonlinear versions of the Poisson-Boltzmann PDE.  In the linear case, we implement these algorithms on real size  biomolecules, the geometrical complexity being  handled thanks to efficient computational geometry algorithms. 

The Poisson-Boltzmann equation describes the electrostatic potential of a biomolecular system in a ionic solution with permittivity
$\varepsilon$, assuming a mean-field distribution of solvent ions---an assumption known as the {\it implicit solvent} 
approximation~\cite{baker-bashford-al-05}. In this work, we are interested in the case of a neutral ionic solution with two kinds of
ions with opposite charge, where the Poisson-Boltzmann equation takes the form
\begin{equation}\label{eq:nonlinearizedPB}
   -\nabla \cdot(\varepsilon(x) \nabla u(x))+ \kappa^2(x) \sinh(u(x))=f(x), \quad \forall x \in \mathbb{R}^3,
\end{equation}
where
\begin{equation}  \label{eq:def-eps-kappa}
  \varepsilon(x)=
  \begin{cases}
    \epsin >0 & \mbox{if\ }x\in\Omega_{\text{in}}, \\
    \epsout >0 & \mbox{if\ }x\in\Omega_{\text{out}},
  \end{cases}\qquad
  \kappa(x)=
  \begin{cases}
    0 & \mbox{if\ }x\in\Omega_{\text{in}}, \\
    \bar\kappa:=\sqrt{\epsout}\  \kappa_{\text{out}}>0 & \mbox{if\ }x\in\Omega_{\text{out}}.
  \end{cases}
\end{equation}
Here, $\Omega_{\text{out}}$ represents the domain of the ionic solvent, and $\Omega_{\text{in}}=\Omega_{\text{out}}^c$ represents
the interior of the molecule, defined as the union of $N$ spheres of centers $c_1,c_2,\ldots,c_N$ and radii $r_1,r_2,\ldots,r_N$
respectively, representing the atoms of the molecule, that is 
$$\Omega_{\text{in}}=\bigcup_{i=1}^N S(c_i,r_i),$$
where $S(c,r)=\{x\in\mathbb{R}^3:|x-c|<r\}$. We denote by $\Gamma$ the boundary of the bounded domain $\Omega_{\text{in}}$. More
precisely, $\Gamma$ can be either the van der Waals surface when the $r_i$ are the radii of the atoms, or the Solvent Accessible
Surface (SAS), which is the set of spheres with radii $r_i = \rho_i+r_s$, where $r_s$ is the radius of solvent molecules and $\rho_i$
are the radii of the atoms. The positive numbers $\epsin$ and $\epsout$ are the relative permittivity of each medium, and $u(x)$
represents the dimensionless potential at $x\in\mathbb{R}^3$, defined as $u(x)=e_c (k_BT)^{-1} \Phi(x)$ where $\Phi$ is the potential
and the constants $e_c$, $k_B$ and $T$ are respectively the charge of an electron, Boltzmann's constant and the absolute temperature.
Some differences may be found in the normalizing constants depending on the bibliographic references. We focus here on the choice of
the normalization, the values of each constants and the derivation of Poisson-Boltzmann's equation given in ~\cite{holst-94}, also
used in the Poisson-Boltzmann solver APBS~\cite{baker-sept-al-01}.

The source term $f$ is given as a sum of Dirac measures:
\begin{equation}  \label{eq:terme-source}
  f(dx)=\sum_{i=1}^{N} \left( \dfrac{e_c^2}{k_B T \varepsilon_0} \right) z_i \delta_{c_i}(dx),
\end{equation}
where $\varepsilon_0$ represents the absolute permittivity of vacuum, and $z_i$ is the relative charge of the $i$-th atom of the
molecule (relative charge meaning its actual charge divided by $e_c$). Note that, even though $\varepsilon$ and $\kappa$ are discontinuous
and $f$ is a measure, a proper notion of solution can be found in~\cite{chen-holst-xu-07}. Note also that $\kappa$ is sometimes
considered discontinuous at the ion accessible surface (obtained as the SAS surface with $r_s$ replaced by the radius of ions in the
solvent) and $\varepsilon$ at the van der Waals or the SAS surface. In this work we present our methods for a single discontinuity
surface for both $\kappa$ and $\varepsilon$ (either Van-der-Walls or SAS), but the methods can easily treat the double discontinuity
surfaces model.

Finally, $\bar\kappa^{-1}$ is the Debye length in the ionic solution (see e.g.~\cite{holst-94}),
\begin{equation}  \label{eq:kappa}
   \bar\kappa^{-2}=\dfrac{2 \mathcal{N}_A e_c^2 I}{\epsout\varepsilon_{0} k_B T},
\end{equation}
where $\mathcal{N}_A$ is the Avogadro constant, and  $I=c\,z^2$ is the ionic strength of the solvent,  where $c$ is the concentration of one of the ion species in the ionic solution and $z$ its relative charge (we recall that the two ion species have the same concentration and opposite charges).  

The values of all the physical constants  used  in the simulations are reported in Appendix \ref{appendixA}. 

In structural biology, the Poisson-Boltzmann equation is often used in  its linearized form: 
\begin{equation}  \label{eq:linearizedPB}
  -\nabla(\varepsilon(x) \nabla u(x))+ \kappa^2(x) u(x)=f(x), \quad \forall x \in \mathbb{R}^3
\end{equation}
which gives a good approximation of the electrostatic potential around uncharged molecules. We refer to Folgari et
al.~\cite{fogolari-al-02} for a survey on applications in structural biology.

Monte Carlo (MC) algorithms for the linearized Poisson-Boltzmann equation were proposed first by Mascagni and Simonov in~\cite{mascagni-simonov-03,mascagni-simonov-04}
and improved or extended in~Mascagni et al. \cite{simonov-mascagni-al-07,simonov-08,mascagni-al-10,mascagni-al-13}, Bossy et al.~\cite{bossy-champagnat-al-09}. These MC algorithms involve double randomization techniques~\cite{mascagni-simonov-04}, walk on spheres techniques~\cite{sabelfeld-91} to simulate Brownian motion exit times and positions in $\Omega_{\text{out}}$ and $\Omega_{\text{in}}$, and asymmetric jump methods from the boundary $\Gamma$ to
take into account the discontinuity of $\varepsilon$ in the divergence form of the PDE \eqref{eq:linearizedPB}.  
Recently, Lejay and Maire~\cite{lejay-maire-13} and Maire and  Nguyen~\cite{maire-nguyen-13} have proposed  new replacement methods from the boundary $\Gamma$ which improve the order of convergence of the original algorithm. 

Section \ref{sec:linear} is devoted to MC algorithms for the linearized Poisson-Boltzmann equation. 
After recalling their  general forms, we compare these MC algorithms on biomolecule geometries, i.e.\ for domains $\Omega_{\text{in}}$ defined from measured biomolecular data. 
The key issue to deal with real-size biomolecule geometries consists in finding efficient algorithms to locate the closest atom from any position in $\mathbb{R}^3$. In this work, we use 
the efficient power diagram construction, search and exploration tools developed in the CGAL library~\cite{CGAL} to solve this specific problem.

Section \ref{sec:nonlinear} is devoted to the study of probabilistic interpretations and Monte Carlo methods for the nonlinear Poisson-Boltzmann equation. We show how branching versions of diffusion processes may be used to deal with the nonlinear version of the PDE (Sections~\ref{sec:PDE-BBM} and~\ref{sec:first-choice}). We  derive the corresponding  Monte Carlo algorithm in Sections~\ref{sec:algo} and~\ref{sec:algo2}. We test  the numerical method on simple molecule test-cases with one or two atoms (Section~\ref{sec:cas-test}).

\section{Linear case}\label{sec:linear}

This section deals with the linearized Poisson-Boltzmann equation \eqref{eq:linearizedPB}. The Monte-Carlo algorithm used to solve
this equation was first proposed by Mascagni and Simonov in \cite{mascagni-simonov-04}, and the probabilistic interpretation of the
PDE and improved replacement algorithms are given in Bossy et al~\cite{bossy-champagnat-al-09}. This last reference also presents numerical tests on cases where the molecule has one or two atoms.

In this work, the simulation code that we implement can deal with molecules having an arbitrary number of atoms. We use the PDB format files of biomolecules which can be found for example on the RCSB Protein Data Bank, and convert it into PQR files containing the positions, radii and charges of all the atoms of the biomolecule using PDB2PQR~\cite{PDB2PQR}. This gives all the parameters of the
Poisson-Boltzmann equation, except the Debye length $\bar{\kappa}^{-1}$, which is computed from~\eqref{eq:kappa} (see the Appendix \ref{appendixA} for the explicit values).
 
When designing a Monte-Carlo method for the linearized Poisson-Boltzmann equation, 
the main difficulty is to deal correctly with the discontinuous coefficient in the divergence form operator. The key point is the equivalent formulation of the equation~\eqref{eq:linearizedPB} as two subproblems
\begin{gather}
  \label{eq:PDE-1}
  \begin{cases}
    -\varepsilon_{\text{in}}\Delta u(x)=f(x) &
    \mbox{for\ }x\in \Omega_{\text{in}} \\
    u(x)=h(x) & \mbox{for\ }x\in\Gamma,
  \end{cases} \\
  \label{eq:PDE-2}
  \begin{cases}
    -\varepsilon_{\text{out}}\Delta u(x)+\bar\kappa^2 u(x)=0 &
    \mbox{for\ }x\in \Omega_{\text{out}} \\
    u(x)=h(x) & \mbox{for\ }x\in\Gamma,
  \end{cases}
\end{gather}
 with a transmission condition~(see e.g.\cite{ladyzhenskaya-uraltseva-68}), which holds true in general for smooth $\Gamma$ (see~\cite{bossy-champagnat-al-09}): 
\begin{equation}  \label{eq:raccordement}
\begin{aligned}
\mbox{$h = u|_{\Gamma}$, $u$ is continuous on $\mathbb{R}^3$, and }\\
  \varepsilon_{\textup{in}}\nabla_{\text{in}}u(x)\cdot
  n(x)=\varepsilon_{\textup{out}}\nabla_{\text{out}}u(x)\cdot n(x),\quad\forall x\in\Gamma,
  \end{aligned}
\end{equation}
where
\begin{equation*}
  \nabla_{\text{in}}
  u(x):=\lim_{y\in\Omega_{\textup{in}},\:y\rightarrow x}\nabla u(y),\qquad
  \nabla_{\text{out}}
  u(x):=\lim_{y\in\Omega_{\textup{out}},\:y\rightarrow x}\nabla\varphi(y),\quad\forall x\in\Gamma
\end{equation*}
and $n(x)$ is the normal vector to $\Gamma$ at $x\in\Gamma$ pointing towards $\Omega_{\text{out}}$.

\subsection{A general probabilistic interpretation for \eqref{eq:PDE-1}-\eqref{eq:PDE-2}
-\eqref{eq:raccordement}}

The general principle of the Monte Carlo algorithms used here is given by the following extended Feynman-Kac formula for the solution
$u$ of \eqref{eq:linearizedPB} (see \cite{bossy-champagnat-al-09}) $\forall x \in \mathbb{R}^d \setminus \{ c_1,\ldots, c_N\}$,
\begin{equation}  \label{eq:FK}
  u(x)=\mathbb{E}_x \left[ \sum_{k=1}^{+\infty} \Big(u_0({X}_{\tau_k}) - u_0({X}_{\tau'_k})\Big) \exp \Big( - \int_0^{\tau_k} \varepsilon_{\text{out}} \kappa^2({X}_s)ds\Big)\right]
\end{equation}
where for all $h>0$, we denote $\Omega_{\text{in}}^h:=\{ x \in \Omega_{\text{in}}, d(x,\Gamma)\geq h\}$ and define $\tau'_0=0$ and
$\forall k \geq 1$,
\begin{align*}
  \tau_{k} &=\inf \{ t \geq \tau'_{k-1}, {X}_t \in \Omega_{\text{in}}^h \} \\
  \tau'_k &=\inf \{ t \geq \tau_{k}, {X}_t \in \Gamma \}.
\end{align*}
The function 
\begin{equation}\label{eq:def-u_0}
u_0(x):=\frac{e_c^2}{k_B T \varepsilon_0}  \sum \limits_{i=1}^N \dfrac{z_i}{4 \pi \epsin |x-c_i|}  
\end{equation}
is solution to $-\epsin \Delta u_0=f$ in $\mathbb{R}^3$, and $({X}_t, t \geq 0)$ is the weak solution to the stochastic
differential equation with weighted local time at the boundary $\Gamma$
\begin{equation*}
\left\{
 \begin{aligned}
  &{X}_t=x +\int_0^t \sqrt{2 \varepsilon({X}_{\theta})} dB_{\theta} + \frac{\varepsilon_{\text{out}}-\epsin}{2 \epsout} \int_0^t n({X}_{\theta}) dL^0_{\theta}(Y) \\
&Y_t \text{ is the signed distance of } {X}_t \text{ to } \Gamma \text{ (positive in } \Omega_{\text{in}} \text{)},\\
 & L^0(Y) \text{ is the local time at 0 of the semimartingale } Y.
 \end{aligned}
\right.
\end{equation*}
The formula \eqref{eq:FK} suggests to use a Monte Carlo approximation of $u$ based on $M$ independent simulations of $(X_t,t\geq 0)$. 
The MC algorithm computes the solution $u$ at specific points (as needed for the computation of the solvation free energy~\cite{baker-bashford-al-05}) and of the same PDEs with different parameters~\cite{simonov-mascagni-al-07}.

Moreover, the MC method takes  advantage of the geometry of the problem (the molecule is a union of spheres):
\begin{itemize}
\item away from $\Gamma$, since the paths of $X$ are scaled Brownian paths, we can use (centered and uncentered) walk on spheres techniques~\cite{sabelfeld-91} as 
fast numerical scheme. 
\item close to $\Gamma$, since~\eqref{eq:FK} only involves the position of $X_t$  and the amount of time spent in
$\Omega_{\text{out}}$ by $X$ between $\tau'_k$ and $\tau_k$ for all $k\geq 1$, we approximate $X$ by a process
that {\it jumps} away from $\Gamma$ when it hits $\Gamma$.
\end{itemize}

\subsection{The main steps of the Monte-Carlo algorithm}

Let us describe the steps of the simulation of $X$ for the Monte Carlo algorithms.
\subsubsection{Outside the molecule: the walk on spheres (WOS) algorithm}
\label{sec:outside}

Recall that in $\Omega_{\text{out}}$, $u$ is a solution to $ -\frac{1}{2}\Delta u+\lambda u=0$ with $\lambda:=\kappa^2_{\text{out}}/2$. Therefore, $u(x)=\mathbb{E}_x [u|_{\Gamma}(B_{\tau})e^{-\lambda\tau}]$, where under $\mathbb{P}_x$, $(B_t)_{t \geq 0}$ is a Brownian motion started at $x$ and $\tau$ is its first hitting time of $\Gamma$.

It is well-known~\cite{sabelfeld-91,mascagni-simonov-04,bossy-champagnat-al-09,lejay-maire-13} that the WOS algorithm simulates exactly successive positions of $B$ in $\Omega_{\text{out}}$ until it reaches --a small neighborhood of--  $\Gamma$, taking into account the exponential term in the probabilistic interpretation by means of a constant rate of killing $\lambda$.

Starting at a point $y_0:=x\in\Omega_{\text{out}}$, we find the largest sphere $S(y_0,r_0)$ included in $\Omega_{\text{out}}$ and centered at $y_0$. In that sphere, the killed Brownian motion either dies before reaching the boundary of $S(y_0,r_0)$ with probability $1- r_0 \sqrt{2\lambda}/\sinh (r_0 \sqrt{2\lambda})$, or it reaches the sphere boundary   at a point $y_1$ uniformly distributed on $S(y_0,r_0)$. We then start again the same procedure from $y_1$, and we obtain thus a sequence $(y_k)_{k\geq 0}$, possibly killed. Except in very specific situations, this sequence will a.s. never hit $\Gamma$ with a finite number of steps. Hence it is classical to introduce a small parameter $\epsilon>0$ and to stop the algorithm either at the first killing or at the first step where $y_k$ is in the $\epsilon$-neighborhood of $\Gamma$ and project $y_k$ on $\Gamma$ to obtain an approximation of the exit point. More formally, this algorithm can be written as follows.

\medskip
{\tt
\begin{enumerate}[noitemsep]
\item[] {\bf WOS algorithm in the domain $\Omega_{\text{out}}$.}
\item[] Set $k=0$. Given $y_0\in \Omega_{\text{out}}$, $\lambda\geq 0$, and
$\varepsilon >0$
\item Let $S(y_k,r_k)$ be the largest open sphere included in $\Omega_{\text{out}}$
  centered at $y_k$.
\item Kill the particle with probability
  $1-r_k\sqrt{2\lambda}/\sinh(r_k\sqrt{2\lambda})$, and goto END  if
killed.
\item  Sample $y_{k+1}$ according to the uniform distribution on
  $\partial S(y_k,r_k)$.
\item IF $d(y_{k+1},\partial \Omega_{\text{out}})\leq\varepsilon$, THEN set $\textit{exit\_position}(y_0)$ as the
  closest point of $\partial \Omega_{\text{out}}$ from $y_{k+1}$ and goto END.\\
  ELSE, set $k=k+1$ and return to Step~(1).
\item[] END. 
\end{enumerate}
}

\medskip
At least for smooth $\Gamma$, it is known~\cite{sabelfeld-91} that this algorithm stops a.s. in finite time, after a mean number of steps
of order $O(|\log(\epsilon)|)$. Moreover when $u$ is continuous on $\Gamma$,
\begin{equation}\label{eq:FK-outside}
  \mathbb{E}[u(\textit{exit\_position}(y_0))\mathbbm{1}_{\{\text{exit before killing}\}}]=\mathbb{E}_{y_0} [u(B_{\tau})e^{-\lambda\tau}]+O(\epsilon).  
\end{equation}

\subsubsection{CGAL Library: search for the closest atom}
\label{sec:cgal}

Given a position $y\in \Omega_{\text{out}}$, the first step of the WOS algorithm requires to construct the biggest open sphere
$S=S(y,r)$ in $\Omega_{\text{out}}$ with center $y$. In other words, it requires to find the nearest atom of the molecule from $y$,
i.e. the atom with index $\argmin_{1\leq i \leq N} (\|y-c_i\|-r_i)$, where $\|\cdot\|$ is the Euclidean norm in $\mathbb{R}^3$.

The simplest way to achieve such a search consists in doing a ``brute force'' naive search among the $N$ atoms to find the closest,
with a computational cost of order $N$. However, for such a minimization problems, it is generally possible to construct search algorithms with computational cost of order $\log N$, if one can afford to build an appropriate search tree in a precomputation step. Since $N$ could be large for biomolecules (several thousands) and since our algorithm requires to search for the closest atom a very large number of times (of order $\log\epsilon$ times for each independent simulation of the Monte-Carlo method), it is clearly interesting for us to use this second method.

The C++ library CGAL (Computational Geometry Algorithms Library)~\cite{CGAL} proposes geometric algorithms allowing to solve this problem. The idea is to construct first the power diagram associated to the set of spheres $(S(c_i,r_i))_{1\leq i \leq N}$, i.e. the partition of the space into polygonal cells, each of which are associated to an index $j\in\{1,\ldots,N\}$, such that the power distance $\|x-c_i\|^2-r_i^2$, $1\leq i\leq N$, of points $x$ of the cell, is minimal for $i=j$. Given a point $y\in\mathbb{R}^3$, the library can then easily compute, with complexity $O(\left|\log(\epsilon)\right| )$, the index of the cell containing $y$. Since it minimizes the power distance, this index is not necessarily the one that minimizes the Euclidean distance, so we need to check that none of the neighboring cells are closer. CGAL also provides tools to explore the neighboring cells in the power diagram, which we used. This last step is needed since otherwise one could use the WOS algorithm on a too large sphere and obtain points that would belong to $\Omega_{\text{in}}$. We ran several tests to find the proportion of such  events. Without the local search step, this proportion is
roughly $8\%$, whereas with the local search step, it drops down to roughly $2\%$\footnote{These proportions of course depend on the   biomolecule and on the starting point for the simulation of $X$; the values given here were obtained from a molecule with 103 atoms described in section \ref{sec:num-lin}.}. We use this local search in all the numerical tests presented here.

For comparison, we implement three methods to find the nearest atom from a point:
\begin{itemize}
\item The \texttt{brute localization} method is the naive but exact method, of complexity $O(N)$.
\item The \texttt{power diagram} method uses the CGAL tools described above, of complexity $O(\log N)$.
\item The \texttt{power diagram with hint}: 
in the WOS algorithm, the closest   atom in the previous step was already computed. Therefore, it can be used as a hint to find the atom with minimal power distance   among the neighboring atoms of the previous one. CGAL proposes options enabling to start the search from specific points. This method   is also of complexity $O(\log N)$, but hopefully with a smaller constant multiplying $\log N$.
\end{itemize}

The computational costs of those three methods are reported in Figure~\ref{fig:locatemethods}. Note that other ideas to improve the computational speed of this localization step were recently developed in~\cite{mascagni-al-13}.

\subsubsection{Inside the molecule: uncentered walk on spheres (UWOS) algorithm}
\label{sec:inside}

If the current position of the particle belongs to $\Omega_{\text{in}}$, we use the function $u_0$ in ~\eqref{eq:def-u_0} as the unique bounded solution of the PDE~\eqref{eq:PDE-1} in the domain $\mathbb{R}^3$. Thus $u-u_0$ is harmonic in $\Omega_{\text{in}}$, and using the notation of Section~\ref{sec:outside}, 
  for all $x\in\Omega_{\text{in}}$,
\begin{equation}  \label{eq:FK-inside}
  (u-u_0)(x)=\mathbb{E}\left[(u|_{\Gamma}-u_0)(B_{\tau})\right]. 
\end{equation}

Again, one can use a walk on spheres algorithm to compute this expectation. This can be done by taking advantage of the union of spheres geometry of the molecule. In this case, it is convenient to use an uncentered walk on spheres method: at each step we use the atom sphere to which the simulated path's position belongs, instead of drawing a virtual sphere centered around the current position.  The exit position from the sphere is not uniformly distributed on the sphere, but can be explicitly computed and exactly simulated~\cite{mascagni-simonov-04}. The following algorithm allows the exact simulation of the  exit position of a Brownian motion from $\Omega_{\text{in}}$.
\medskip

{\tt 
\begin{enumerate}[noitemsep]
\item[]{\bf UWOS algorithm in the domain $\Omega_{\text{in}}$.}
\item[] Set $k=0$. Given $y_0\in \Omega_{\text{in}}$,
\item Choose $i\in\{1,\ldots,n\}$ such that $y_k\in S(c_i,r_i)$.
\item Simulate $y_{k+1}=(r_i,\theta,\varphi)$ where $\theta$ is
  uniform on $[0,2\pi]$ and $\varphi$ is independent of $\theta$ with
  cumu\-la\-tive distribution function $F_{r_i,|y_k-c_i|}$, in the
  spherical coordinates centered at $c_i$ such that
  $y_k=(|y_k-c_i|,0,0)$.
\item IF $y_{k+1}\in\partial \Omega_{\text{in}}$, THEN set $exit(y_0)=y_{k+1}$ and goto
END.\\
ELSE, set $k=k+1$ and return to Step~(1).
\item[]END.
\end{enumerate}
}
\medskip

The cumulative distribution function $F_{R,r}$ is explicitly invertible and is given by
$$
F_{R,r}(\alpha):=\frac{R^2-r^2}{2Rr}
\left(\frac{R}{R-r}-
  \frac{R}{\sqrt{R^2-2Rr\cos\alpha+r^2}}\right).
$$

\subsubsection{On the boundary of the molecule: the jump method}
\label{sec:boundary}

The last ingredient of the MC algorithm is the discretization procedure to apply when the process $X$ hits the boundary $\Gamma$.  
We use approximations of the process $X$ that jumps immediately
after hitting $\Gamma$ either in $\Omega_{\text{in}}$ at a distance $h$ from $\Gamma$ (as in the probabilistic
representation of the solution $u$ in~\eqref{eq:FK}), or in $\Omega_{\text{out}}$ at a distance $\alpha h$ from $\Gamma$, for some constant $\alpha>0$. More
formally, we associate to each $x\in\Gamma$ a random variable $p(x)$ a.s. belonging to
$(\mathbb{R}^3\setminus\Gamma)\cup\{\partial\}$,  distributing the new position in $\mathbb{R}^3\setminus\Gamma$ of the process after its jump, or killing the process when it belongs to the cemetery point $\partial$. 

\medskip
The following algorithm computes a score along the trajectory of an approximation of $(X_t)_{t\geq 0}$.  In view of the probabilistic representation of $u$ ~\eqref{eq:FK}, the Monte-Carlo average of this score approximates $u(x_0)$.
\medskip

{\tt
\begin{enumerate}[noitemsep]
\item[]Given $x_0\not\in\{c_1,\ldots,c_N\}$, set $k=0$ and $score=u_0(x_0)$ if $x_0\in\Omega_{\text{in}}$ or $score=0$ otherwise. 
\item IF $x_k\in\Omega_{\text{in}}$,
  \begin{enumerate}[noitemsep]
  \item THEN use the UWOS algorithm to simulate $\textit{exit\_position}(x_k)$ and set
    $score =score - u_0(\textit{exit\_position}(x_k))$,
  \item ELSE use the WOS algorithm with
    $\lambda=\bar{\kappa}^2/2\varepsilon_{\text{out}}$ to simulate $\textit{exit\_position}(x_k)$.\\
    IF the particle has been killed, THEN return $score$ and goto END. 
  \end{enumerate}
\item Set $x_{k+1}$ equal to an independent copy of $p(\textit{exit\_position}(x_k))$.
\item IF $x_{k+1}\in\Omega_{\text{in}}$, THEN set $score =score +u_0(x_{k+1})$.
\item Set $k=k+1$ and return to Step~(1).
\item[]END.
\end{enumerate}
}
\medskip

In this work, we consider three different jump methods, i.e. three different families of r.v. $(p(x))_{x\in\Gamma}$. All are based on a finite difference approximation of the transmission condition~\eqref{eq:raccordement}. Note that other types of jump methods have been studied in the literature, among which ``jump on spheres'' techniques~\cite{simonov-08} and neutron transport approximations~\cite{bossy-champagnat-al-09}. For the first two methods, we follow the terminology of~\cite{bossy-champagnat-al-09}.

\medskip
\noindent \textbf{Symmetric normal jump (SNJ)}: 
This method is the one proposed by Mascagni and Simonov in their seminal paper~\cite{mascagni-simonov-04}. It can be justified by a first-order expansion in~(\ref{eq:raccordement}): for all $x\in\Gamma$,
$$
u(x)=\frac{\varepsilon_{\text{out}}}{\varepsilon_{\text{in}}+\varepsilon_{\text{out}}}
u(x+hn(x))+
\frac{\varepsilon_{\text{in}}}{\varepsilon_{\text{in}}+\varepsilon_{\text{out}}}u(x-h \, n(x))
+\mbox{ remainder}, 
$$
where the remainder is of order  $O(h^2)$ provided that the solution $u$ to the Poisson-Boltzmann equation has uniformly bounded second-order derivatives in $\Omega_{\text{out}}$ and $\Omega_{\text{in}}$. This holds true at least if $\Gamma$ is a $C^\infty$ manifold~\cite[Thm.\,2.17]{bossy-champagnat-al-09}. The expansion can be written as an expectation involving a Bernoulli r.v.\ $B$ with parameter $\frac{\varepsilon_{\text{out}}}{\varepsilon_{\text{in}}+\varepsilon_{\text{out}}}$ as
\begin{equation}  \label{eq:jump}
  u(x)=\mathbb{E}[u(x+(2 B-1)hn(x))]+O(h^2).
\end{equation}
This suggests the following choice for the r.v. $p(x)$: fix $h>0$, then for all $x\in\Gamma$,
\begin{equation}
  \label{eq:def-p-SNJ}
  p_{\text{SNJ}}(x)=
  \begin{cases}
    \displaystyle x+h\, n(x) & \mbox{with probability\ }
    \displaystyle \dfrac{\varepsilon_{\text{out}}}{\varepsilon_{\text{in}}+\varepsilon_{\text{out}}} \\[0.3cm]  
    \displaystyle x-h\, n(x) & \mbox{with probability\ }
    \displaystyle \dfrac{\varepsilon_{\text{in}}}{\varepsilon_{\text{in}}+\varepsilon_{\text{out}}}.
  \end{cases}
\end{equation}
Note that the full simulation algorithm with the SNJ jump method (called the SNJ algorithm)  can also be obtained by successive iterations of the formulas~\eqref{eq:FK-inside},~\eqref{eq:FK-outside} and~\eqref{eq:jump}, as explained in~\cite{mascagni-simonov-04}). This suggests that an error of order $h^2$ accumulates at each time the discretized process hits $\Gamma$. Since this number of hitting times is of order $1/h$, this suggests a global error of order $h$. Taking into account the additional error in the WOS algorithm, one can actually prove for smooth $\Gamma$ that the error between $u(x)$ and the expectation of the score of the SNJ algorithm is of order $O(h+\epsilon/h)$ when $h,\varepsilon\rightarrow 0$~\cite[Thm.\,4.7]{bossy-champagnat-al-09}.
\medskip

\noindent \textbf{Asymmetric normal jump (ANJ)}:
This method, proposed in \cite{bossy-champagnat-al-09}, can also be deduced from the transmission condition \eqref{eq:raccordement}, by introducing different finite difference steps to approximate the interior and exterior gradients. We fix $h>0$ and introduce a fixed parameter $\alpha>0$. Then, for all $x\in\Gamma$, we set
\begin{equation}
\label{eq:def-p-ANJ}
 p_{\text{ANJ}}(x)=
\begin{cases}
  x+\alpha h \, n(x) & \mbox{with probability\
  }\dfrac{\varepsilon_{\text{out}}}{\varepsilon_{\text{out}}+\alpha\varepsilon_{\text{in}}} \\[0.4cm]  
  x- h \, n(x) & \mbox{with probability\ }\dfrac{\alpha\varepsilon_{\text{in}}}
  {\varepsilon_{\text{out}}+\alpha\varepsilon_{\text{in}}}.
\end{cases}
\end{equation}
The error of the ANJ algorithm obtained with this jump method is also of order $O(h+\epsilon/h)$~\cite[Thm.\,4.7]{bossy-champagnat-al-09}, but, if $\alpha>1$, the process is moved further away from $\Gamma$ when it jumps in $\Omega_{\text{out}}$. Since the process is killed with a larger probability when it starts in $\Omega_{\text{out}}$ further away from $\Gamma$, assuming $\alpha>1$ makes the computational cost of a simulation score smaller than for the SNJ algorithm. Of course, a compromise must be found with the increased bias for increased $\alpha>1$, which is analysed in~\cite{bossy-champagnat-al-09}.

\medskip
\noindent \textbf{Totally Asymmetric Jump (TAJ)}: This jump method from $\Gamma$ is a new proposition in the context of the
Poisson-Boltzmann equation. It was originally  proposed in a two-dimensional context by Lejay and Maire \cite{lejay-maire-13} and for
more general equations and boundary conditions by Maire and Nguyen in \cite{maire-nguyen-13}. These methods apply to linear 
divergence form equations with damping. It is based on a higher order expansion of the transmission conditions of the PDE on $\Gamma$ and extensively use the  linearity. 
It does not apply in the nonlinear case, and will not be used in Section~\ref{sec:nonlinear}.

When the process hits the boundary, we replace it as follows: 
\begin{itemize}
\item with probability $\dfrac{\alpha \varepsilon_{\text{in}}}{\alpha
      \varepsilon_{\text{in}}+\varepsilon_{\text{out}}+\frac{1}{2}\bar\kappa^2\alpha^2h^2}$, the process moves toward
  $\Omega_{\text{in}}$ at one of the following four points with uniform probability: 
\begin{equation*}
\left\{x-hn(x)+\sqrt{2}hm(x), \; x-hn(x)+\sqrt{2}hq(x), \;  x-hn(x)-\sqrt{2}hm(x) \text{ or } x-hn(x)-\sqrt{2}hq(x)\right\}, 
\end{equation*}
 where $m(x)$ and $q(x)$ are any two orthonormal vectors in the tangent plane of $\Gamma$ at the point $x$,
 \item with probability $\dfrac{\varepsilon_{\text{out}}}{\alpha
       \varepsilon_{\text{in}}+\varepsilon_{\text{out}}+\frac{1}{2}\bar\kappa^2\alpha^2h^2}$, the process moves toward
   $\Omega_{\text{out}}$ at one of the four points with uniform probability: 
\begin{equation*}
\left\{x+\alpha  hn(x)+ \sqrt{2}\alpha hm(x), \; x+\alpha hn(x)+\sqrt{2}\alpha hq(x), \; x+\alpha hn(x)- \sqrt{2}\alpha hm(x) \text{ or }
 x+\alpha hn(x)-\sqrt{2}\alpha hq(x)\right\},
\end{equation*}
 \item with probability $\dfrac{\frac{1}{2}\bar\kappa^2\alpha^2 h^2}{\alpha
       \varepsilon_{\text{in}}+\varepsilon_{\text{out}}+\frac{1}{2}\bar\kappa^2\alpha^2h^2}$, the process is killed.
\end{itemize}
As stated in Theorem \ref{thm:cv-TAJ} below, the TAJ method is of order 2, whereas SNJ and ANJ are first order methods. 
\begin{thm}
  \label{thm:cv-TAJ}
  Assume that $\Gamma$ is a $C^\infty$ compact manifold in $\mathbb{R}^3$. Then, for all $x\not\in\{c_1,\ldots,c_N\}$, the
  expectation $\bar u_{h,\alpha,\epsilon}(x)$ of the score of the TAJ algorithm with parameters $h$, $\alpha$ and $\epsilon$, started
  from $x_0=x$, satisfies
  $$
  |\bar u(x)-u(x)|\leq C\left(h^2+\frac{\epsilon}{h}\right),
  $$
  for a constant $C$ depending only on $\alpha$ and the finite constant $\sup_{y\not\in\Gamma,\,\|y\|\leq R}(|u(y)|+\|\nabla u(y)\|+\|\nabla^2
  u(y)\|+\|\nabla^3 u(y)\|)$, for $R$ large enough such that $\Gamma\subset B(0,R)$, where $B(0,R)=\{z\in\mathbb{R}^3:\|z\|<R\}$.
\end{thm}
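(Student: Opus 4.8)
The plan is to follow the template used for the SNJ and ANJ methods in~\cite[Thm.\,4.7]{bossy-champagnat-al-09} and to split the global estimate into two ingredients: a \emph{local consistency} statement showing that one TAJ jump from $\Gamma$ reproduces the exact solution $u$ up to an error $O(h^3)$, and an \emph{accumulation} argument controlling how these local errors, together with the $O(\epsilon)$ error of each WOS excursion, add up along the trajectory. The inside and outside excursions are already exact probabilistic representations of the relevant PDEs --- the UWOS reproduces the harmonic extension in $\Omega_{\text{in}}$ via~\eqref{eq:FK-inside}, and the WOS reproduces $\EE_y[u(B_\tau)e^{-\lambda\tau}]$ up to $O(\epsilon)$ via~\eqref{eq:FK-outside} --- so the only genuinely new object to bound is the consistency of the TAJ jump, which is where the second order is gained.

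First I would establish the local consistency. Fix $x\in\Gamma$, complete the normal $n(x)$ into an orthonormal frame with tangent vectors $m(x),q(x)$, and Taylor-expand $u$ to third order around $x$ at the four interior points $x-hn\pm\sqrt2\,hm,\ x-hn\pm\sqrt2\,hq$ and the four exterior points $x+\alpha hn\pm\sqrt2\,\alpha hm,\ x+\alpha hn\pm\sqrt2\,\alpha hq$, producing interior/exterior four-point averages $A_{\mathrm{in}}(x)$ and $A_{\mathrm{out}}(x)$. Averaging over the four points on each side kills the odd contributions, and because $\{n,m,q\}$ is orthonormal the $\sqrt2$ scaling of the tangential steps is exactly what makes the surviving quadratic term equal to $\tfrac12(\text{step})^2\,\Delta u(x)$. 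I would then inject the two PDEs: $\Delta u=0$ in $\Omega_{\text{in}}$ away from the charges $c_i$, so $A_{\mathrm{in}}(x)=u(x)-h\,\nabla_{\text{in}}u\cdot n+O(h^3)$ with the $h^2$ term vanishing; and $\Delta u=\bar\kappa^2 u/\epsout$ in $\Omega_{\text{out}}$, so $A_{\mathrm{out}}(x)=u(x)+\alpha h\,\nabla_{\text{out}}u\cdot n+\tfrac12\alpha^2h^2\bar\kappa^2 u(x)/\epsout+O(h^3)$. Writing $D:=\alpha\epsin+\epsout+\tfrac12\bar\kappa^2\alpha^2h^2$ for the common denominator of the TAJ weights, I would check three cancellations: the transmission condition~\eqref{eq:raccordement} makes the first-order normal-derivative terms cancel precisely because $p_{\text{in}}/p_{\text{out}}=\alpha\epsin/\epsout$; the term $\tfrac12\alpha^2h^2\bar\kappa^2 u(x)/\epsout$ is exactly compensated by routing mass $\tfrac12\bar\kappa^2\alpha^2h^2/D$ into the killing branch, which contributes $0$ to the score; and the coefficient of $u(x)$ then collapses to $D/D=1$. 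This yields $p_{\text{in}}A_{\mathrm{in}}(x)+p_{\text{out}}A_{\mathrm{out}}(x)=u(x)+O(h^3)$, with the remainder controlled by $\sup\|\nabla^3u\|$, which is what puts the third derivative into the constant $C$.

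Next I would convert the one-step estimate into the global bound by a telescoping argument over the successive visits of the discretized process to $\Gamma$. Writing $\bar u(x)$ and $u(x)$ as expectations of scores and subtracting, the difference becomes a sum, over boundary visits, of the per-visit consistency error $O(h^3)$ and the per-WOS-excursion error $O(\epsilon)$, each weighted by the survival probability up to that visit. The key quantitative input is that the expected number of boundary visits before killing is $O(1/h)$: each time the process jumps outside (with probability of order one) it lands at distance $\alpha h$ from $\Gamma$, and the WOS excursion then kills it with probability of order $\alpha h\sqrt{2\lambda}$, by the half-space estimate $\EE_{z_0}[e^{-\lambda T_0}]=e^{-z_0\sqrt{2\lambda}}$ for the hitting time $T_0$ of the boundary; the direct jump killing $\tfrac12\bar\kappa^2\alpha^2h^2/D=O(h^2)$ is of lower order. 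Multiplying the $O(1/h)$ visits by the $O(h^3)$ and $O(\epsilon)$ per-visit errors gives the announced $O(h^2+\epsilon/h)$.

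I expect the accumulation step, rather than the essentially algebraic consistency computation, to be the main obstacle. Making the $O(1/h)$ bound rigorous requires uniform control, over all boundary points and all small $h$, of the return-versus-killing probabilities of the WOS excursions near the \emph{curved} surface $\Gamma$, i.e.\ upgrading the heuristic half-space estimate to a genuine bound valid on the $C^\infty$ compact manifold; the compactness and smoothness of $\Gamma$ (bounded curvature, uniform interior and exterior ball conditions) are precisely what make this possible and also guarantee that $u$ and its derivatives up to order three are bounded up to $\Gamma$ from each side, so that the constant $C$ is finite. A secondary technical point is to keep the jump points away from the singularities $c_i$ of $f$ (valid for $h$ small since the $c_i$ are strictly interior) and to carry the $u_0$ bookkeeping of the score consistently with~\eqref{eq:FK-inside}; both are routine but must be tracked so that the remainders remain $O(h^3)$ and $O(\epsilon)$ uniformly, exactly as in~\cite[Thm.\,4.7]{bossy-champagnat-al-09}.
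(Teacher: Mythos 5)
Your proposal is correct and takes essentially the same approach as the paper: your local consistency step (third-order Taylor expansions of the four-point averages on each side, elimination of tangential second derivatives via the two PDEs thanks to the $\sqrt{2}$ scaling, cancellation of the normal gradients by the transmission condition~\eqref{eq:raccordement}, and absorption of the zeroth-order $\tfrac{1}{2}\bar\kappa^2\alpha^2h^2 u$ correction by the killing branch) is exactly the paper's derivation of~\eqref{eq:dl}. Your accumulation argument (telescoping over the $O(1/h)$ boundary visits, each contributing $O(h^3)+O(\epsilon)$) is precisely the argument the paper invokes by citing Theorem~4.7 of~\cite{bossy-champagnat-al-09}.
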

While the proof is based on similar computations in \cite{maire-nguyen-13}, for the sake of completeness we give a detailed proof in the context of Poisson Boltzmann equation. Note that the above TAJ replacement formulas are more convenient than
the formulas derived  in \cite{maire-nguyen-13}, as they do not require to impose some constraints on $h$. 

\begin{proof}
  In the case where $\Gamma$ is $C^\infty$, it has been proved in~\cite[Thm.\,2.17]{bossy-champagnat-al-09} that the solution $u$ to
  the Poisson-Boltzmann equation satisfies that $u_{\mid\Gamma}$ is $C^\infty$. Hence, the solutions of the two
  subproblems~(\ref{eq:PDE-1}) and~(\ref{eq:PDE-2}) admit derivatives of any order which are continuous up to $\Gamma$, i.e. they
  belong to $C^\infty(\overline{\Omega_{\text{in}}})$ and $C^\infty(\overline{\Omega_{\text{out}}})$, respectively
  (see~\cite{gilbarg-trudinger-01}). In particular, $\nabla^k u$ is bounded on $B(0,R)$ for all $k\geq 0$.

  Hence the following Taylor expansions are valid for all $x\in\Gamma$ and $y\in B(0,R)\setminus\Gamma$:
  \begin{gather}
    u(y)=u(x)+\nabla_{\text{in}}u(x)\cdot(y-x)+\frac{1}{2}(y-x)'\nabla^2_{\text{in}}u(x)(y-x)+O(\|y-x\|^3),\quad\text{ if
    }y\in\Omega_{\text{in}}, \label{eq:TAJ1}\\
    u(y)=u(x)+\nabla_{\text{out}}u(x)\cdot(y-x)+\frac{1}{2}(y-x)'\nabla^2_{\text{out}}u(x)(y-x)+O(\|y-x\|^3),\quad\text{ if
    }y\in\Omega_{\text{out}}, \label{eq:TAJ2}
  \end{gather}
  where the $O(\|y-x\|^3)$ are bounded by $\|y-x\|^3$ times a constant depending only on $\sup_{z\not\in\Gamma,\,\|z\|\leq
    R}\|\nabla^3 u(z)\|$, and where the notation $\nabla_{\text{in}}$ and $\nabla_{\text{out}}$ are extended to higher-order
  derivatives in an obvious way.

  Fix $x\in\Gamma$, $\eta\in\mathbb{R}$ and $\gamma>0$. Without loss of generality, we can assume that $x=0$, $n(x)=(1,0,0)$,
  $q(x)=(0,1,0)$ and $m(x)=(0,0,1)$. We define
  $$
  E^{\eta,\gamma}u=\frac{u(\eta,\gamma \eta, 0)+u(\eta,-\gamma \eta, 0)+u(\eta,0,\gamma \eta)+u(\eta,0,-\gamma \eta)}{4}.
  $$
  Applying~\eqref{eq:TAJ1}, we obtain
  $$
  E^{-h,\gamma}=u(0)-h\nabla_{\text{in}}u(0)\cdot n(0)+\frac{1}{4}\left[2h^2\frac{\partial^2_{\text{in}}u(0)}{\partial x^2}
    +\gamma^2h^2\left(\frac{\partial^2_{\text{in}}u(0)}{\partial y^2}+\frac{\partial^2_{\text{in}}u(0)}{\partial z^2}\right)\right]+O(h^3),
  $$
  and applying~\eqref{eq:TAJ2},
  $$
  E^{\alpha h,\alpha\gamma}=u(0)+\alpha h\nabla_{\text{out}}u(0)\cdot n(0)+\frac{1}{4}\left[2\alpha^2h^2\frac{\partial^2_{\text{out}}u(0)}{\partial x^2}
    +\gamma^2\alpha^2h^2\left(\frac{\partial^2_{\text{out}}u(0)}{\partial y^2}+\frac{\partial^2_{\text{out}}u(0)}{\partial z^2}\right)\right]+O(h^3).
  $$
  Now, the relations $\Delta u(x)=0$ in $\Omega_{\text{in}}$ in the neighborhood of $\Gamma$ and $\Delta u(x)=\kappa_{\text{out}}^2
  u(x)$ in $\Omega_{\text{out}}$ can be extended by continuity to $\Gamma$, so that
  $$
  \frac{\partial^2_{\text{in}}u(0)}{\partial y^2}+\frac{\partial^2_{\text{in}}u(0)}{\partial
    z^2}=-\frac{\partial^2_{\text{in}}u(0)}{\partial x^2},
  $$
  and
  $$
  \frac{\partial^2_{\text{out}}u(0)}{\partial y^2}+\frac{\partial^2_{\text{out}}u(0)}{\partial z^2}
  =-\frac{\partial^2_{\text{out}}u(0)}{\partial x^2}+\kappa_{\text{out}}^2 u(0).
  $$
  This entails
  $$
  E^{-h,\gamma}=u(0)-h\nabla_{\text{in}}u(0)\cdot n(0)+\frac{2-\gamma^2}{4}h^2\frac{\partial^2_{\text{in}}u(0)}{\partial x^2}+O(h^3)
  $$
  and
  $$
  E^{\alpha h,\alpha\gamma}=\left(1+\frac{\kappa_{\text{out}}^2\alpha^2\gamma^2h^2}{4}\right)u(0)+\alpha
  h\nabla_{\text{out}}u(0)\cdot n(0)
  +\frac{2-\gamma^2}{4}\alpha^2h^2\frac{\partial^2_{\text{out}}u(0)}{\partial x^2}+O(h^3).
  $$
  Hence, choosing $\gamma=\sqrt{2}$, we obtain
  \begin{equation}
    \label{eq:dl}
    \frac{\alpha\varepsilon_{\text{in}}}{\alpha\varepsilon_{\text{in}}+\varepsilon_{\text{out}}(1+\kappa_{\text{out}}^2\alpha^2h^2/2)}
    E^{-h,\sqrt{2}}
    +\frac{\varepsilon_{\text{out}}}{\alpha\varepsilon_{\text{in}}+\varepsilon_{\text{out}}(1+\kappa_{\text{out}}^2\alpha^2h^2/2)}E^{\alpha h,\sqrt{2}\alpha}
    =u(0)+O(h^3),
  \end{equation}
  where the gradient terms canceled because of~\eqref{eq:raccordement} and where the $O(h^3)$ is bounded by $h^3$ times a constant
  depending only on $\sup_{z\not\in\Gamma,\,\|z\|\leq R}\|\nabla^3 u(z)\|$. The TAJ jump method corresponds exactly to the
  probabilistic interpretation of this formula.

  Theorem~\ref{thm:cv-TAJ} then follows from~\eqref{eq:dl} exactly as Theorem~4.7 of~\cite{bossy-champagnat-al-09} follows from
  Equation~(4.19) of~\cite{bossy-champagnat-al-09}.
\end{proof}

\subsection{Numerical experiments}\label{sec:num-lin}

\subsubsection{Parallel version of the algorithm}
It is usually very simple to implement a parallel version of a Monte-Carlo algorithm.  This is the case for our algorithm. The only delicate issue for
reliable and statistically sound calculations is the parallel generation of pseudo random numbers. In our  MPI\footnote{Message Passing Interface (MPI) is a standardized and portable message-passing system available on a wide variety of parallel computers.} parallel implementation of the code, we used the 
SPRNG 4.4 library~\cite{sprng}. 

\subsubsection{Comparison of the `locate nearest atom' methods}

As described in Subsection \ref{sec:cgal}, we implemented three different methods to approximate the closest atom from a given
particle position in $\mathbb{R}^3$. In Figure \ref{fig:locatemethods}, we present the CPU time for each method as a function of the
size of the molecule. We use 5 molecules of different sizes: the molecule composed of $N=103$ atoms described in the next subsection,
and the molecules \texttt{2KAM}, \texttt{4HHF}, \texttt{1KDM}, \texttt{1HHO}, \texttt{1HFO} and \texttt{4K4Y} of RCSB Protein Data
Bank, with sizes ranging from $N=416$ to $N=29420$ atoms. We start each simulation from a location close to the alpha-carbon atom of
the first residue of the molecule. We use the SNJ
algorithm with $h=0.1$ and $\epsilon=10^{-4}$ (the shape of the curves is roughly independent on the jump method). We run $10^5$ independent simulations for each molecule on a laptop computer.  Of course, the
computational time is very dependent of the shape of the molecule and of the initial position of the algorithm, so that the CPU time
does not necessarily increase with $N$, as observed in Figure \ref{fig:locatemethods} for the largest molecules.

However, as expected, the \texttt{power diagram with hints} method is the fastest, at least for large molecules. The \texttt{power
  diagram} method is slightly slower, but the difference is not very significant. The \texttt{brute localization} method is up to 10
times slower than the \texttt{power diagram with hints} method for large molecules, but is actually faster for molecules of sizes
smaller than a thousand atoms.

\begin{figure}[ht]
  \centering
  \subfigure[CPU times (Log scale) in terms of the size of the molecule (Log scale)\label{fig:locatemethods}]{
    \includegraphics[width=.45\textwidth]{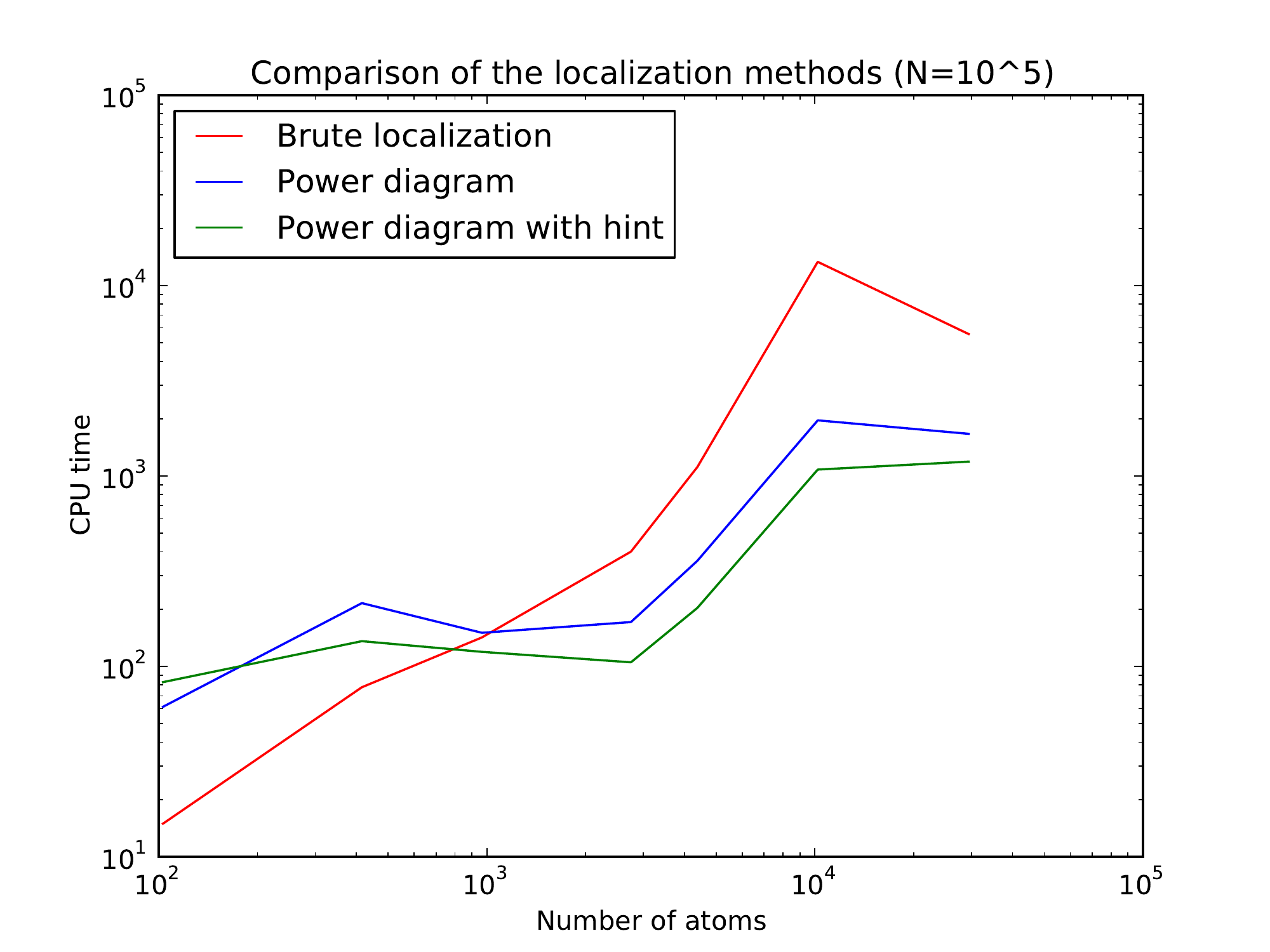}}\quad
  \subfigure[Comparison with APBS. The blue curve is produced with our code, and the red curve with APBS.\label{fig:APBS}]{
    \includegraphics[width=.45\textwidth]{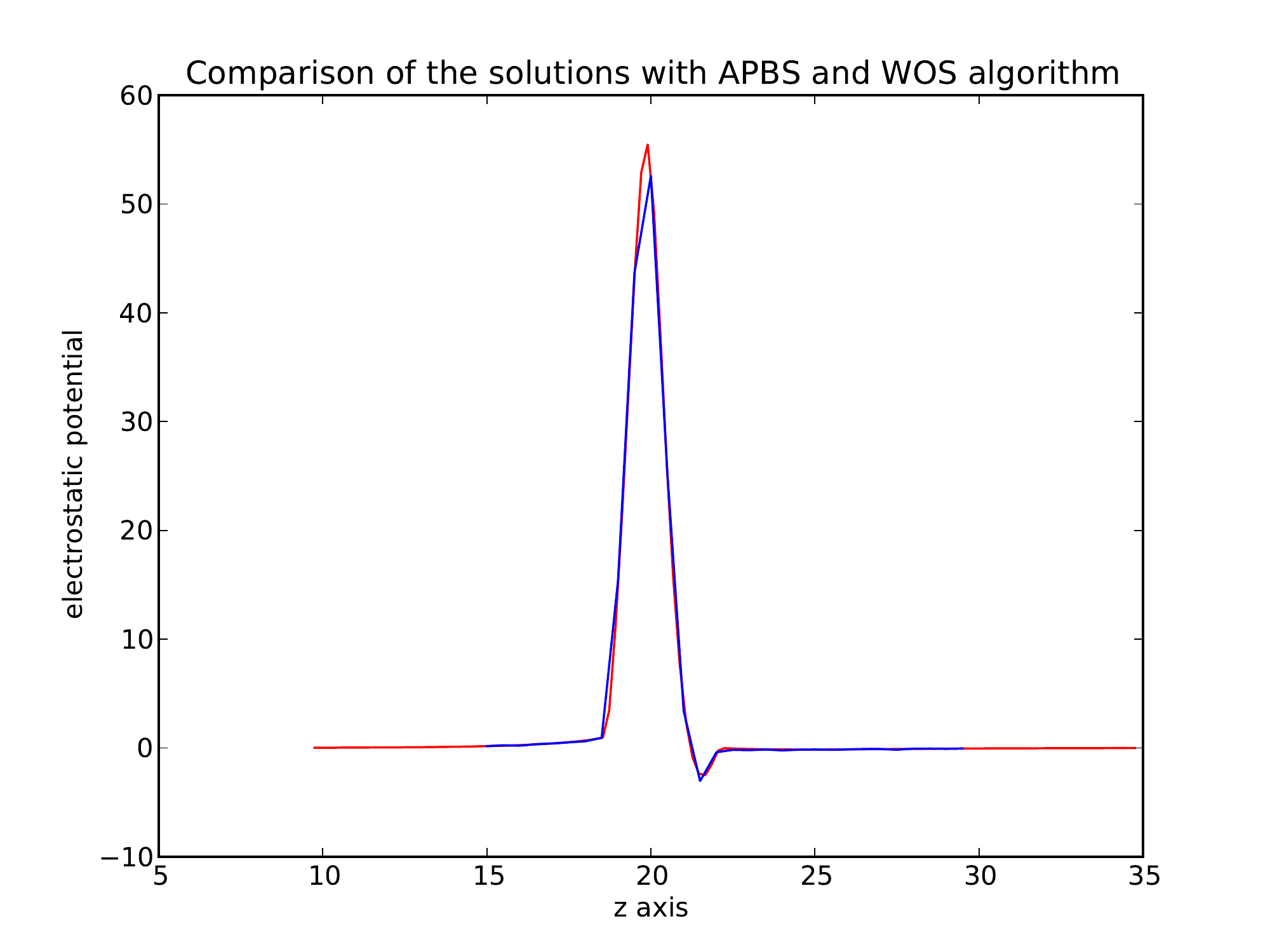}}
  \caption{Comparison of the localization methods and adequation with a deterministic method.}\label{figure1}
\end{figure}

\subsubsection{Comparison with APBS}\label{subsec:numAPBS}

As a validation of the algorithm, we compared our Monte-Carlo estimate of $u(x)$ with the value computed by a deterministic solver of
Poisson-Boltzmann equation. We choose APBS solver~\cite{baker-sept-joseph-al-01}, which uses adaptive finite element methods and
algebraic multilevel methods.

This numerical experiment is done on a small peptide composed of 6 residues (GLU-TRP-GLY-PRO-TRP-VAL) and $N=103$ atoms. To produce the plots in
Figure~\ref{fig:APBS}, we have calculated $u$ at 30 different points of the space located on a line close to the alpha-carbon of the
first residue (GLU). 
Our Monte-Carlo method was run with the SNJ jump method, $h=0.1$, $\epsilon=10^{-4}$ and $4\times 10^4$ independent simulations for each
initial points to compute the Monte-Carlo average. 
The agreement between the two curves is quite good, although there are some differences, which might be due either to the Monte-Carlo error, or to the discretization in APBS. 

\subsubsection{The TAJ methods' convergence order in the single atom case}

The goal of this experiment is to check that the expected error of the TAJ method converges faster than ANJ methods, as suggested by
Theorem~\ref{thm:cv-TAJ}. We used the simplest molecule, composed of a single atom, which is the only practical case where $\Gamma$
is $C^\infty$ in Poisson-Boltzmann equation. In~\cite{bossy-champagnat-al-09}, some comparisons were done on the  SNJ and ANJ convergences in  such a case. We consider an atom with radius $1\usk\angstrom$ and charge $1$. We compute the approximation of $(u-u_0)(x_0)$, where
$x_0$ is the center of the atom, using SNJ, ANJ ($\alpha=3$ and $\alpha=10$) and TAJ ($\alpha=1$, $\alpha=3$ and $\alpha=10$)
methods. The Monte-Carlo average is computed from $10^7$ independent simulations for each method and each values of $h$, ranging from
$0.003$ to $0.9$ and we took $\epsilon=10^{-5}$. The results are compared with the exact value of $(u-u_0)(x_0)$ for which an exact
expression is known~\cite{bossy-champagnat-al-09}.

\begin{figure}[ht]
  \centering
  \subfigure[Monte-Carlo average  as a function of $h$ (Log
  scale).\label{fig:average_lin_1_atom}]{\includegraphics[width=.45\textwidth]{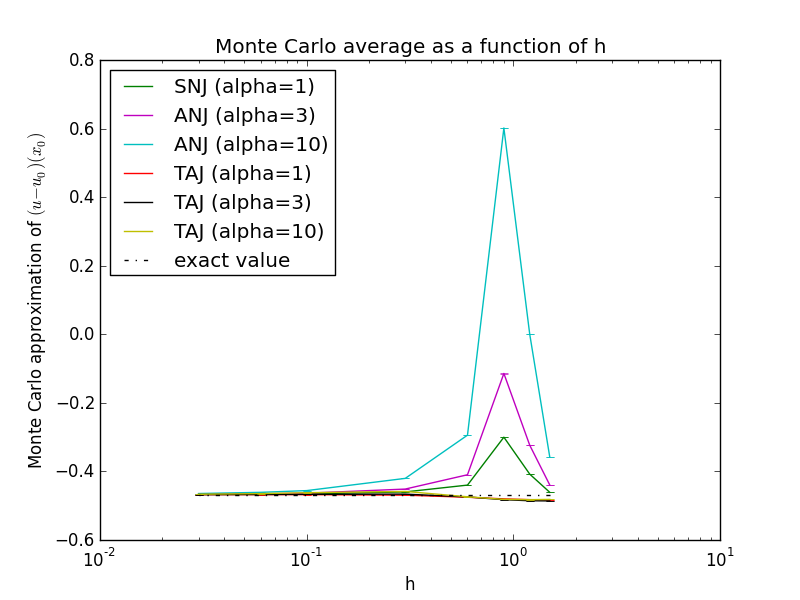}}\quad
  \subfigure[Error (Log scale)  as a function of $h$ (Log scale).\label{fig:error_lin_1_atom}]{
    \includegraphics[width=.45\textwidth]{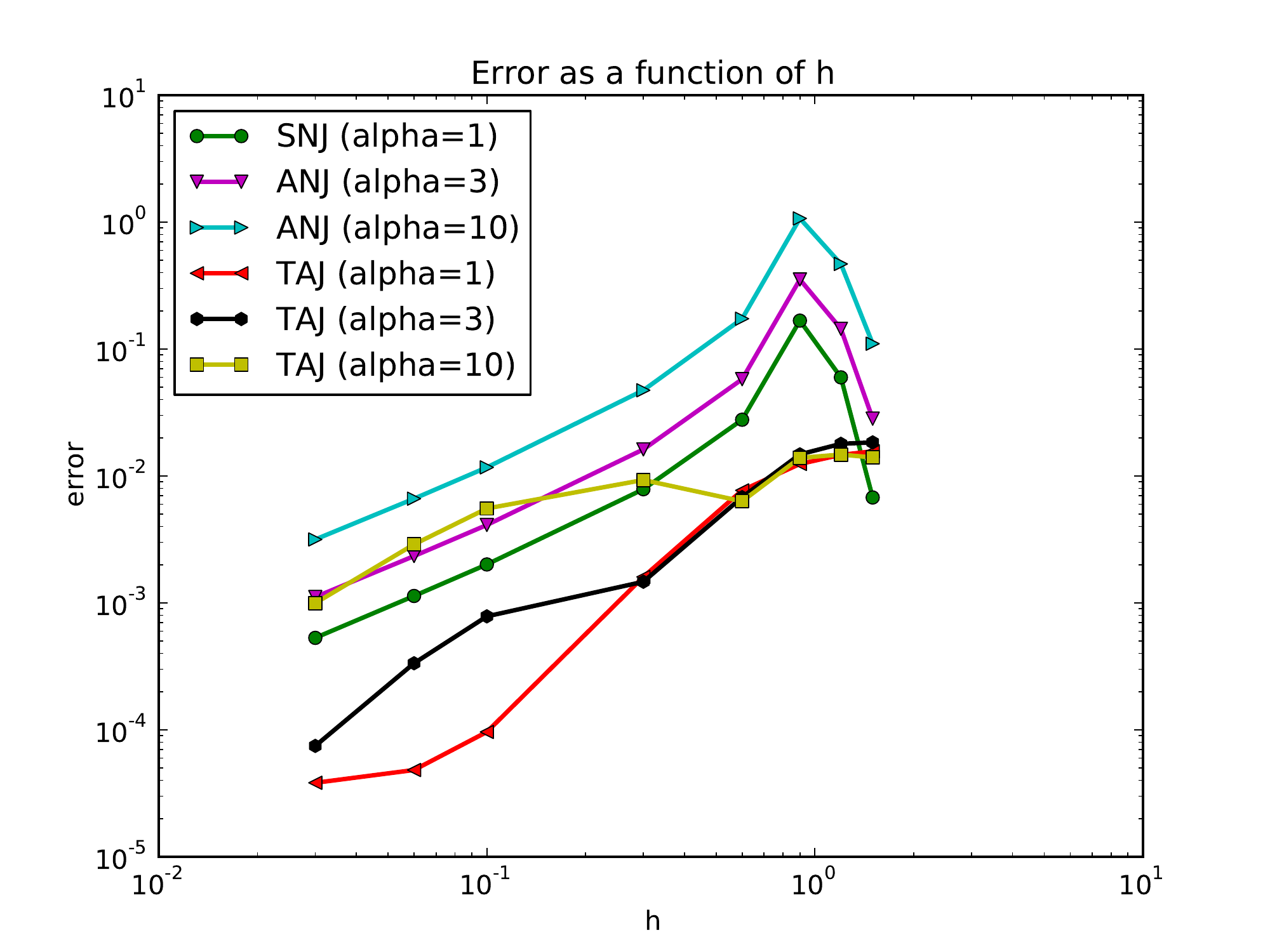}}
  \caption{Convergence and error of the linear with jump methods \texttt{SNJ}, \texttt{ANJ} ($\alpha=3$ and $\alpha=10$) and \texttt{TAJ} ($\alpha=1$, $\alpha=3$ and $\alpha=10$) for the single atom case.}\label{fig:TAJ-1-atom}
\end{figure}

The results are shown in Figure~\ref{fig:TAJ-1-atom} and Figure~\ref{fig:perform_lin_1_atom}. As expected, we observe a faster convergence for
the TAJ methods, although the second order is not very clear because of the Monte-Carlo error. The first order of convergence for the
SNJ and ANJ methods can be observed much more clearly. 

It seems that for a given $h$, the error is smaller for a smaller parameter $\alpha$ both for ANJ and TAJ methods, but this needs to
be compared with the CPU time of the simulations. The performance plot of Figure~\ref{fig:perform_lin_1_atom} shows the expected error
of the Monte-Carlo algorithm as a function of CPU time. It reveals that, for a given CPU time of computation and choosing an
appropriate value of $h$, the expected error of the algorithm is comparable for the three different values of $\alpha$, and is
slightly smaller for $\alpha=10$ for ANJ algorithms. This is consistent with the tests realized in~\cite{bossy-champagnat-al-09}.
This plot also confirms the better efficiency of the TAJ methods.

\subsubsection{Comparison between the different jump methods on a biomolecule}

The goal of this experiment is to compare the SNJ, ANJ and TAJ methods on a small molecule, but with realistic geometry. We use the
molecule composed of 103 atoms described in Section~\ref{subsec:numAPBS}. We use the jump methods SNJ, ANJ  ($\alpha=3$ and $\alpha=10$) and TAJ ($\alpha=1$, $\alpha=3$ and $\alpha=10$). The simulations are done with $\epsilon=10^{-6}$ and
with different values for $h$, ranging from $0.9$ to $0.003$, and we take the same number of Monte-Carlo simulations ($10^6$)
for each run, large enough to be able to detect and compare the convergence of the expectation of the score computed by our algorithms.

\begin{figure}[ht]
  \centering
  \subfigure[Monte-Carlo average  as a function of $h$ (Log
  scale).\label{fig:average_lin_eq7}]{\includegraphics[width=.45\textwidth]{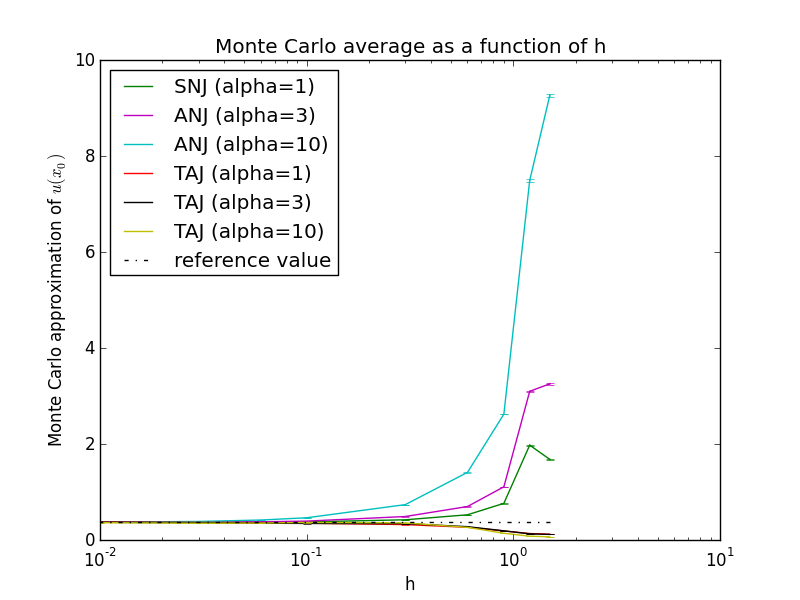}}\quad
  \subfigure[Error (Log scale)  as a function of $h$ (Log scale).\label{fig:error_lin_eq7}]{
    \includegraphics[width=.45\textwidth]{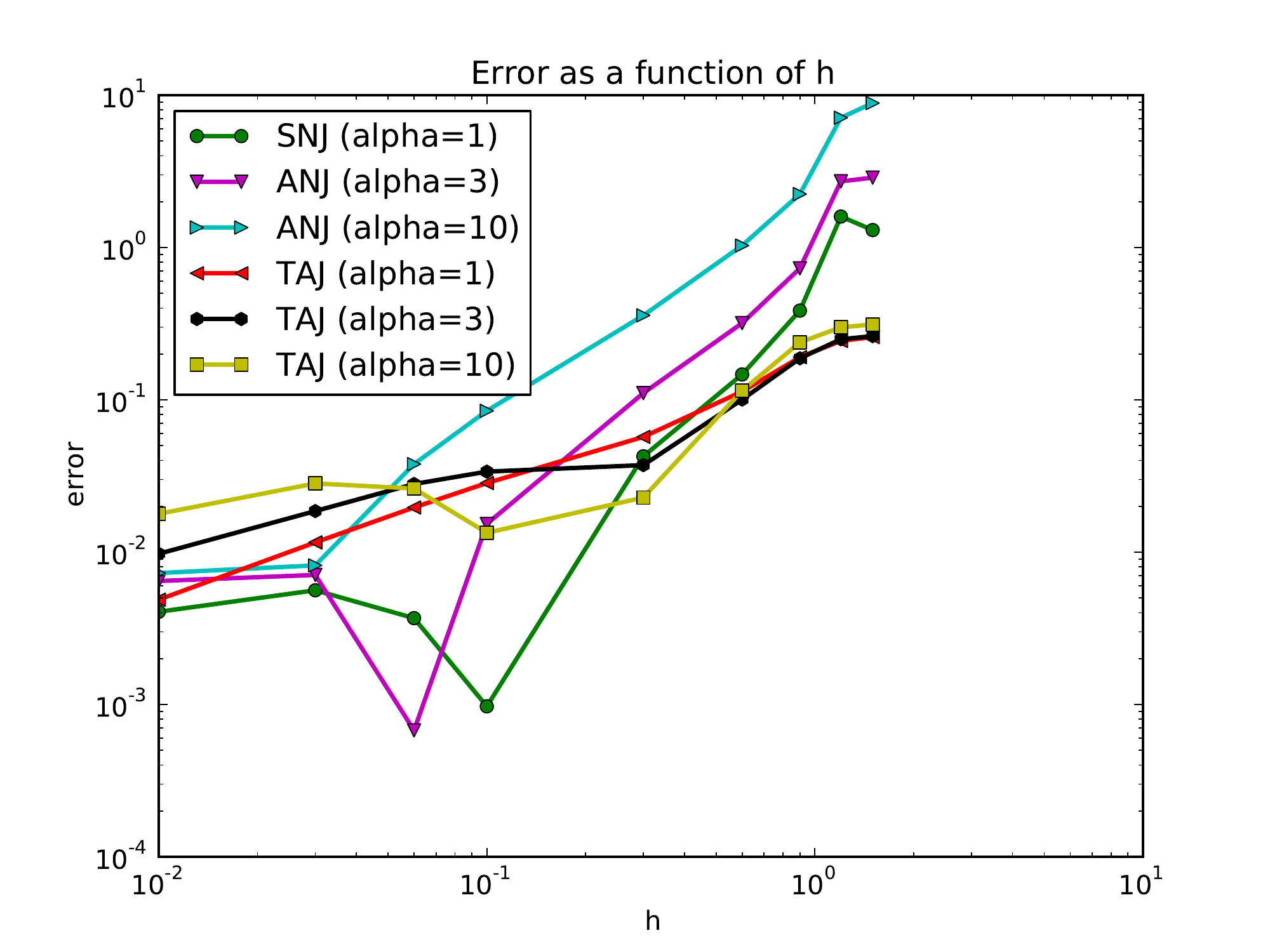}}
  \caption{Convergence and error of the linear with jump methods \texttt{SNJ}, \texttt{ANJ} ($\alpha=3$ and $\alpha=10$) and \texttt{TAJ} ($\alpha=1$, $\alpha=3$ and $\alpha=10$) for a molecule composed   of 103 atoms.}\label{fig:TAJ-eq7}
\end{figure}

Figure~\ref{fig:TAJ-eq7} shows the approximation of $u(x_0)$ by Monte-Carlo average, and the associated error relative to a reference value computed using ANJ ($\alpha=3$) algorithm, $h=0.001$, $\epsilon=10^{-7}$ and $(15\cdot10^6)$ runs for the Monte Carlo average. The point $x_0$ is chosen out to the molecule, but close (at $1\angstrom$) to the Trp amino acid. 

We also compared the result with the value
computed with the APBS method, but it differs from the Monte-Carlo reference value of roughly $7$\%, which is too large to detect the
rate of convergence for $h$ small. We suspect that this difference is due to the finite element discretization used in APBS.  

All 6 methods show a good convergence. The higher order of convergence of the TAJ method cannot be detected because of the
statistical error of the Monte Carlo method. However, we observe a smaller error for larger values of $h$ for the TAJ method than for
the ANJ ones. Of course, the TAJ methods might not converge with order 2 as in Theorem~\ref{thm:cv-TAJ}, because $\Gamma$ is not
smooth. This indeed also causes some difficulties in the implementation of the method, since it might occur, even for small $h$, that
a jump from $\Gamma$ to the direction of $\Omega_{\text{out}}$ actually gives a new position inside the molecule. This is
particularly true for the TAJ algorithm. Our tests show that the result of the algorithm is quite sensitive to the method used in this particular situation, and can produce differences  up to $5$\%. When this situation occurs, we chose here to push the particle
outside of the molecule, at a distance $h$ of $\Gamma$, close to its position before the jump. This choice shows good agreement
between the values given by the ANJ and TAJ algorithms.

As in the case of a single atom, we compare the performance of the 6 methods in Figure~\ref{fig:perform_lin_eq7}. This plot does not
show as clear conclusions as for the case of a single atom, but it confirms that TAJ has a better performance at small CPU times.

\begin{figure}[ht]
  \centering
  \subfigure[The error (Log scale) as a function of CPU time (Log
  scale), on a signe atom.\label{fig:perform_lin_1_atom}]{\includegraphics[width=.47\textwidth]{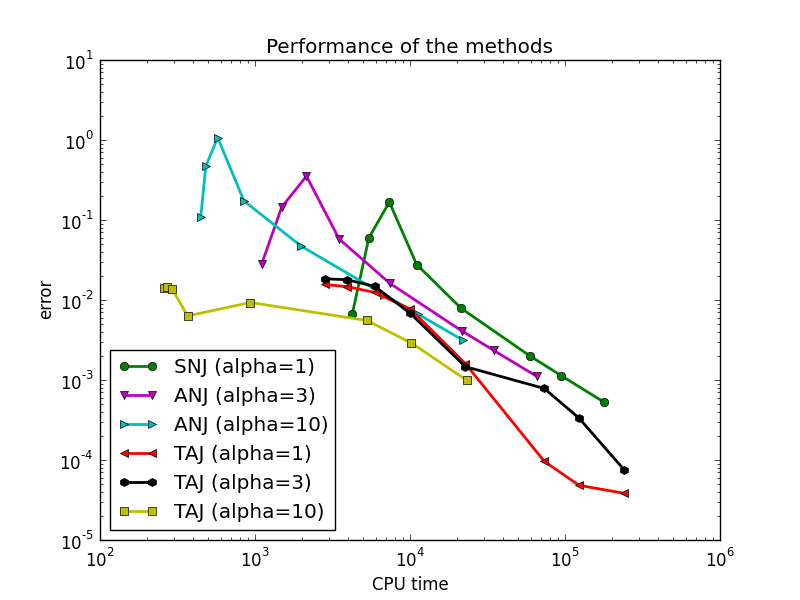}}\quad
  \subfigure[The error (Log scale) as a function of CPU time (Log
  scale), on a molecule  of 103 atoms.\label{fig:perform_lin_eq7}]{\includegraphics[width=.47\textwidth]{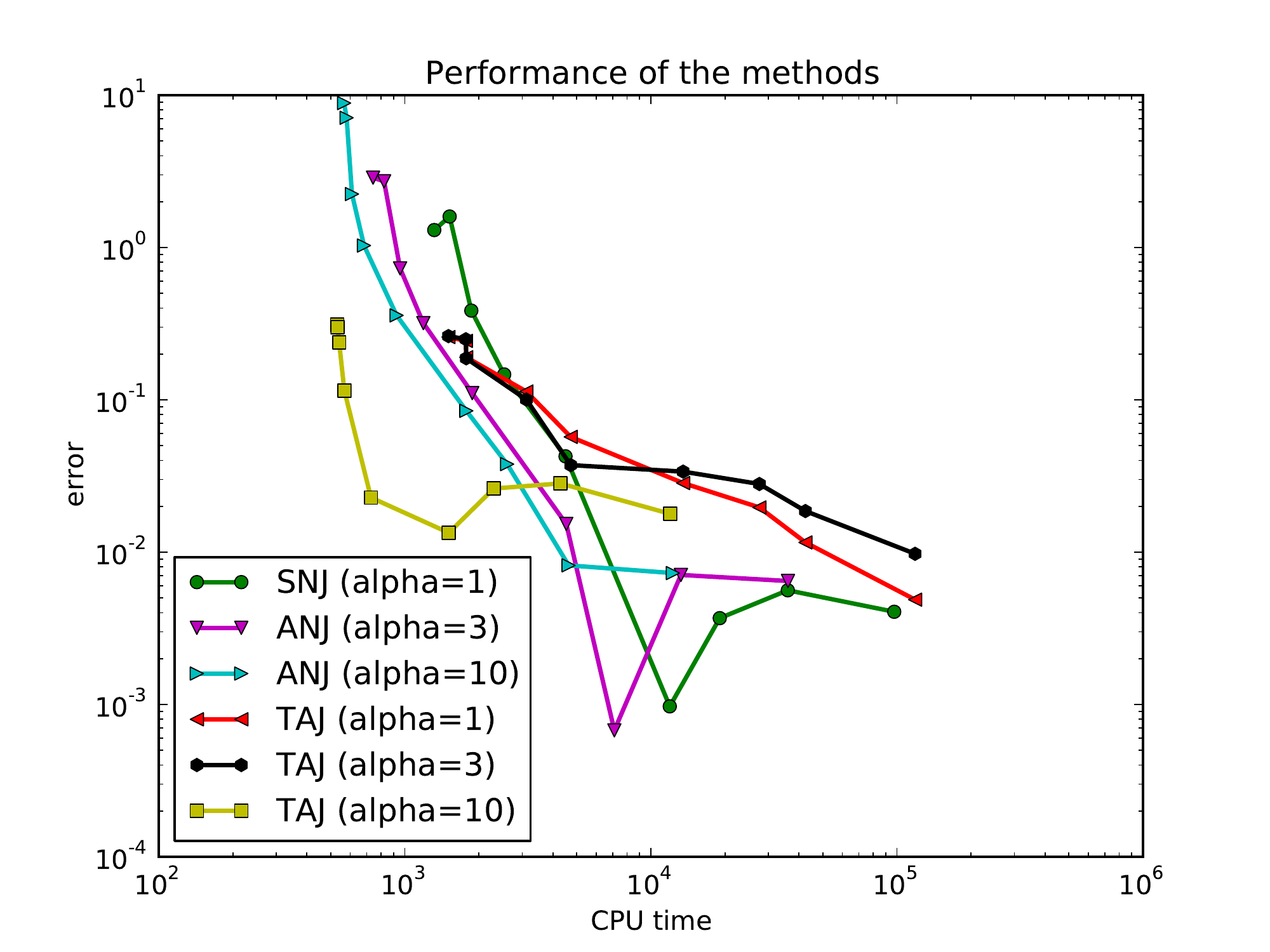}}
  \caption{Performance plots of the 6 linear with jump methods \texttt{SNJ}, \texttt{ANJ} and \texttt{TAJ} for a single atom (a), and a molecule composed of 103 atoms (b).}\label{fig:TAJ-both-perform}
\end{figure}
\section{Non-linear case}\label{sec:nonlinear}

The method we present here extends the previous probabilistic interpretation to the non-linear Poisson-Boltzmann
equation~\eqref{eq:nonlinearizedPB}, by making use of the link between PDEs with non-linearity of order 0 (in $u$) and branching
diffusions. Our algorithm is based on the simulation of a family of branching particles in $\mathbb{R}^3$. As in the linear case, our
method involves walk on spheres algorithms inside and outside the molecule, a jump and scoring procedure when the particle hits
$\Gamma$ (SNJ and ANJ jump methods), and a killing rate outside of the molecule. In the nonlinear case, our method also involves a possible reproduction of
a particle when it dies. This modifies the scoring procedure which is  now  based on the product of the scores of the daughter particles.

Similarly to the linear case, the Poisson-Boltzmann equation~\eqref{eq:nonlinearizedPB} can be reformulated as two PDEs in
$\Omega_{\text{in}}$ and $\Omega_{\text{out}}$, and the transmission condition~\eqref{eq:raccordement}. The PDE in
$\Omega_{\text{in}}$ is still~\eqref{eq:PDE-1},  but in $\Omega_{\text{out}}$ the PDE~\eqref{eq:PDE-2} is replaced by
\begin{equation}
  \label{eq:PB-ext}
    \begin{cases}
  -\frac{1}{2}\Delta u(x)+\lambda_0\sinh u(x)=0,&
    \mbox{for\ }x\in \Omega_{\rm{out}} \\
    u(x)=h(x) & \mbox{for\ }x\in\Gamma,
  \end{cases}  
\end{equation}
where
$
\lambda_0:=\frac{\kappa_{\text{out}}^2}{2} = \frac{\bar{\kappa}^2}{2 \epsout}.
$

Subsection~\ref{sec:QL-PDE--BBM} is devoted to the description of the branching Brownian motion and its link with
elliptic non-linear PDEs. Our algorithms are then described in Subsection~\ref{sec:mainstep}, and numerical
experiments are described in Subsection~\ref{sec:cas-test}.

\subsection{Quasi-linear elliptic PDEs and Branching Brownian motion} \label{sec:QL-PDE--BBM}

\subsubsection{Branching Brownian motion}
\label{sec:BBM}

Let us construct a branching Brownian motion (denoted BBM below) in $\Omega_{\text{out}}$ by defining the dates of birth ($\theta$)
and death ($\sigma$) of each particle and the positions of the particles between their birth and death times. We use the classical
Ulam-Harris-Neveu labelling for individuals, i.e.\ each individual is labeled by an element of ${\cal H}=\bigcup_{n\geq
  0}\mathbb{N}^n$. The ancestor is denoted by $\emptyset$ ($=\mathbb{N}^0$ by convention), and the $j$-th child of an individual
$u\in{\cal H}$ is denoted by $uj$, where $uj$ stands for the concatenation of the vector $u$ and the number $j$. Some of the
particles of ${\cal H}$ never appear in the population, so we need to introduce a cemetery point $\partial$ to code for those
individuals. More precisely, when $v\in{\cal H}$ satisfies $\theta_v=\sigma_v=\partial$, it  means that the individual $v$
never lived in the BBM.

We now introduce the following independent stochastic objects:
\begin{itemize}
\item let $(B^v_t,t\geq 0)_{v\in{\cal H}}$ be i.i.d.\ Brownian motions;
\item let $(E_v)_{v\in{\cal H}}$ i.i.d.\ exponential r.v.\ of parameter $\lambda$;
\item let $(K_v)_{v\in{\cal H}}$ i.i.d.\ r.v.\ in $\mathbb{N}$ with distribution $(p_k)_{k\geq 0}$,
\end{itemize}
the parameters $\lambda$ and $p_k$, $k\geq 0$, will be chosen later.

Fix $x\in\Omega_{\text{out}}$. We construct the BBM started from $x$ (i.e.\ the r.v.\ $\theta_v$, $\sigma_v$ in
$[0,+\infty)\cup\{\partial\}$ and $X^v_t\in\Omega_{\text{out}}$ for $t\in[\theta_v,\sigma_v]$ for all $v\in{\cal H}$) as follows:
\begin{enumerate}
\item The initial particle has position $x\in\Omega_{\text{out}}$; its label is $\emptyset$, $\theta_\emptyset=0$,
  $\sigma_\emptyset=\inf\{t\geq 0:x+B^\emptyset_t\in\Gamma\}\wedge E_{\emptyset}$ and $X^\emptyset_t=x+B^\emptyset_t$ for all
  $t\leq\sigma_\emptyset$.
\item Suppose that the r.v. $\theta_{v}$ (birth time of individual $v$) and $\sigma_v$ (death time of individual $v$) and $X^v_t$,
  $t\in[\theta_v,\sigma_v]$ are constructed
  \begin{enumerate}
  \item If $\sigma_v\not=\partial$ and $X^v_{\sigma_v}\not\in\Gamma$, then for all $1\leq i\leq K_v$, $\theta_{vi}=\sigma_v$, and
    for all $i> K_v$, $\theta_{vi}=\sigma_{vi}=\partial$,  for all $1\leq i\leq K_v$, we also define
\begin{align*}
&\sigma^\Gamma_{vi}=\inf\{t\geq\theta_{vi}:  X^v_{\sigma_v}+B^{vi}_t-B^{vi}_{\theta_{vi}}\in\Gamma\},\\
&\sigma_{vi}=(\theta_{vi}+E_{vi})\wedge \sigma^\Gamma_{vi}, \mbox{and}\\
&X^{vi}_t=X^v_{\sigma_v}+B^{vi}_t-B^{vi}_{\theta_{vi}}
\end{align*}
    for all $t\in[\theta_{vi},\sigma_{vi}]$.
  \item If $\sigma_v\not=\partial$ and $X^v_{\sigma_v}\in\Gamma$, then $\theta_{vi}=\sigma_{vi}=\partial$ for all $i\geq 1$.
  \item If $\sigma_{v}=\partial$, then $\theta_{vi}=\sigma_{vi}=\partial$ for all $i\geq 1$.
  \end{enumerate}
\end{enumerate}
We denote by $\mathbb{P}_x$ the law of the BBM when the first particle has initial position $x$, and $\mathbb{E}_x$ the corresponding
expectation.

Note that, in the case where $\Omega_{\text{out}}=\mathbb{R}^3$, we obtain the standard branching Brownian motion (cf.
e.g.~\cite{mckean-75}), in which the number of particles is a continuous-time branching process $(Z_t,t\geq 0)$ with branching rate
$\lambda$ and offspring distribution $(p_k)_{k\geq 0}$. For general $\Omega_{\text{out}}$, the BBM process can be coupled with the
standard one such that the number of particles alive at time $t$ is smaller than $Z_t$ for all $t\geq 0$.

In particular, in the case where $\sum_{k\geq 0}k p_k\leq 1$, the branching process $Z$ is sub-critical or critical and hence all the
particles of the BBM process either die or reach $\Gamma$ after an almost surely finite time.

\subsubsection{A general probabilistic interpretation of quasi-linear elliptic PDEs in terms of BBM}
\label{sec:PDE-BBM}

The general link between branching Markov processes and non-linear parabolic PDEs is known
since~\cite{skorohod-64,ikeda-nagasawa-al-68a}. The particular case of the binary branching Brownian motion (with $p_2=1$ and $p_k=0$
for all $k\not=2$) has been particularly studied because of its link with the Fisher-Kolmogorov-Petrovskii-Piskunov
PDE~\cite{mckean-75,bramson-78}. This approach has been also used to give a probabilistic interpretation of the Fourier transform of
Navier-Stokes equation~\cite{lejan-sznitman-97}. In contrast, the link between branching diffusions and elliptic PDEs like
Poisson-Boltzmann equation has not been exploited a lot in the literature. Surprisingly, the probabilistic interpretations of parabolic or elliptic non-linear PDEs seem not to have been used a lot for numerical purpose
either (see e.g~\cite{vilelamendes-09,rasulov-raimova-al-10,henry-labordere-12,henry-labordere-al-13}). The general result we
present here about the link between non-linear elliptic PDEs and the BBM, and our method of proof, are original (as far as we know).

Let $g:\mathbb{Z}_+\rightarrow \mathbb{R}$ and $h:\Gamma\rightarrow\mathbb{R}$ be bounded functions and assume that the power series
$\sum_{k\geq 0}p_k g(k) x^k$ has infinite radius of convergence. We consider the following quasi-linear elliptic PDE:
\begin{equation}
  \label{eq:general-PDE}
  -\frac{1}{2}\Delta u (x)+\lambda u(x)-\lambda\sum_{k\geq 0}p_k g(k)u(x)^k=0,\qquad x\in\Omega_{\text{out}},
\end{equation}
with boundary condition $u(x)=h(x)$ for all $x\in\Gamma$.

\begin{thm}
  \label{thm:FK}
  Consider the BBM of the previous subsection and assume that the PDE~\eqref{eq:general-PDE} has a $C^2$ solution $u$, bounded and
  with bounded first-order derivatives. For all $n\geq 0$, let $\tau_n$ be the first time where the BBM has $n$ alive particles in
  the whole elapsed time. Then, for all $n\geq 0$ and $x\in\Omega_{\rm{out}}$,
  \begin{equation}
    \label{eq:prob-int-stopped}
    \mathbb{E}(Y_{\tau_n})=u(x),
  \end{equation}
  where
  \begin{align*}
    Y_t & =\prod_{v\in\mathcal{H}}\left[g(K_v)\mathbbm{1}_{\{\sigma_v\leq t,\ X^v_{\sigma_v}\not\in\Gamma\}}+h(X^v_{\sigma_v})
      \mathbbm{1}_{\{\sigma_v\leq t,\ X^v_{\sigma_v}\in\Gamma\}}+u(X^v_t)\mathbbm{1}_{\{\theta_v\leq t<\sigma_v\}}\right] \\
    & = \prod_{v\in\mathcal{H}}\left[g(K_v)\mathbbm{1}_{\{\sigma_v\leq t,\ X^v_{\sigma_v}\not\in\Gamma\}}+u(X^v_{\sigma_v\wedge t})
      \mathbbm{1}_{\{X^v_{\sigma_v}\in\Gamma\text{\ or\ }\theta_v\leq t<\sigma_v\}}\right]
  \end{align*}
  and $Y_\infty=\lim_{t\rightarrow\infty} Y_t$ is well-defined on the event $\{\tau_n=\infty\}$.

  Assume further that $\sum_{k\geq 0}k p_k\leq 1$. Then, $\mathbb{P}(\tau_n<\infty)\rightarrow 0$ when $n\rightarrow\infty$ and 
  $Y_\infty$ is well-defined a.s. Then, in the case where $\mathbb{E}(|Y_\infty|)<\infty$, we have, for all $x\in\Omega_{\text{out}}$,
  \begin{equation}
    \label{eq:prob-int}
    u(x)=\mathbb{E}_x\left\{\prod_{v\in\mathcal{H}:\sigma_v\not=\partial}\left[\mathbbm{1}_{\{X^v_{\sigma_v}\not\in\Gamma\}}\ g(K_v)+\mathbbm{1}_{\{X^v_{\sigma_v}\in\Gamma\}}\
        h(X^v_{\sigma_v})\right]\right\}.
  \end{equation}
\end{thm}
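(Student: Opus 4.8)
The strategy is to prove that $(Y_{\tau_n \wedge t})_{t \geq 0}$ is a martingale for each fixed $n$, which immediately yields $\mathbb{E}_x(Y_{\tau_n \wedge t}) = Y_0 = u(x)$ (since at time $0$ there is a single particle at $x$ inside $\Omega_{\text{out}}$, so the product collapses to the single factor $u(X^\emptyset_0) = u(x)$). The truncation at $\tau_n$ is the key device: by its definition $\tau_n$ is the first time the BBM has involved $n$ particles in total, so up to $\tau_n$ the process has at most $n$ living particles and the product $Y_t$ has only finitely many nontrivial factors, which keeps every manipulation rigorous and avoids convergence issues. Passing $t \to \infty$ in $\mathbb{E}_x(Y_{\tau_n \wedge t}) = u(x)$ will give \eqref{eq:prob-int-stopped}, and then a separate limiting argument in $n$, using the subcriticality hypothesis $\sum_k k p_k \leq 1$, will yield \eqref{eq:prob-int}.

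\textbf{Martingale property.}
First I would verify the martingale property by a generator/It\^o computation. Between branching and boundary-hitting events, each living particle $v$ contributes a factor $u(X^v_t)$, and since the other factors are frozen, applying It\^o's formula to the product and using that each $X^v$ is a Brownian motion, the drift of $Y_t$ is governed by $\frac{1}{2}\Delta u$ evaluated at the live particles. At a death event at time $\sigma_v$ with $X^v_{\sigma_v} \notin \Gamma$, the particle branches into $K_v$ offspring: the factor $u(X^v_{\sigma_v})$ is replaced by $g(K_v)$ and then by $\prod_{i=1}^{K_v} u(X^{vi}_{\sigma_v})$ as the children start moving, so conditionally on $K_v = k$ and on the position, the expected instantaneous change of the factor is $\lambda\bigl(\sum_k p_k g(k)\, u^k - u\bigr)$ at that point (the rate $\lambda$ coming from the exponential clocks $E_v$). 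Summing the continuous drift and the jump contribution over all live particles, the total drift of $Y_t$ is proportional to $\bigl(-\frac{1}{2}\Delta u + \lambda u - \lambda \sum_k p_k g(k) u^k\bigr)$ evaluated along the trajectories, which vanishes by the PDE \eqref{eq:general-PDE}. Hence $Y_{\tau_n \wedge t}$ is a local martingale, and boundedness of $u$, $h$, $g$ together with the at-most-$n$-factors truncation makes it a genuine bounded martingale on $[0,\tau_n]$.

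\textbf{Limits in $t$ and $n$.}
For \eqref{eq:prob-int-stopped} I would take $t \to \infty$: on $\{\tau_n < \infty\}$ all finitely many particles eventually die or hit $\Gamma$, so $Y_{\tau_n \wedge t} \to Y_\infty$; on $\{\tau_n = \infty\}$ the limit $Y_\infty$ is well-defined by the statement, and dominated convergence (using the uniform bound from the $\leq n$ factors) gives $\mathbb{E}_x(Y_{\tau_n}) = u(x)$. For \eqref{eq:prob-int} I would then let $n \to \infty$: subcriticality forces the total progeny of the BBM to be a.s. finite, so $\tau_n = \infty$ eventually and $\mathbb{P}(\tau_n < \infty) \to 0$; on $\{\tau_n = \infty\}$ every particle has either hit $\Gamma$ (contributing $h(X^v_{\sigma_v})$) or died in $\Omega_{\text{out}}$ (contributing $g(K_v)$), and the live-particle indicators $\mathbbm{1}_{\{\theta_v \leq t < \sigma_v\}}$ all vanish in the limit, so $Y_\infty$ reduces exactly to the product in \eqref{eq:prob-int}. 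The final passage $\mathbb{E}_x(Y_{\tau_n}) \to \mathbb{E}_x(Y_\infty)$ is justified under the stated integrability hypothesis $\mathbb{E}(|Y_\infty|) < \infty$.

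\textbf{Main obstacle.}
I expect the main difficulty to lie not in the formal It\^o computation but in handling the simultaneous continuous and jump dynamics of an \emph{infinite} product cleanly, and in justifying the interchange of limit and expectation in the final $n \to \infty$ step. The truncation at $\tau_n$ is precisely what tames the first issue, reducing the product to finitely many factors and making It\^o's formula legitimate; the delicate point is the uniform integrability needed for the last passage, which is exactly where the hypotheses $\sum_k k p_k \leq 1$ (ensuring a.s. finite genealogy, hence $\mathbb{P}(\tau_n<\infty)\to 0$) and $\mathbb{E}(|Y_\infty|)<\infty$ enter. Controlling $Y_\infty$ on the vanishing events $\{\tau_n<\infty\}$ without an a priori bound on the number of live particles is the subtle part.
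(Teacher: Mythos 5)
Your proposal is correct and follows essentially the same route as the paper's proof: the paper likewise computes the semimartingale decomposition of $Y_t$ (formalized there via Poisson point measures driving a measure-valued representation of the BBM), observes that the drift vanishes because of the PDE~\eqref{eq:general-PDE}, uses the boundedness of the at most $n$ factors before $\tau_n$ to turn the stopped local martingale into a bounded martingale and pass $t\to\infty$ by dominated convergence, and then treats the limit $n\to\infty$ exactly as you do, invoking subcriticality and the hypothesis $\mathbb{E}|Y_\infty|<\infty$. The final interchange of limit and expectation that you flag as delicate is handled just as tersely in the paper, so your level of justification matches the published argument.
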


The last formula extends the classical Feynman-Kac formula to a class of non-linear elliptic PDEs.

Note that the existence of a $C^2$ solution to~\eqref{eq:general-PDE} holds for example if $\Gamma$ and $h$ are
$C^\infty$~\cite{gilbarg-trudinger-01}. In the case of Poisson-Boltzmann PDE, a PDE of the form~\eqref{eq:general-PDE} is coupled with
a PDE in $\Omega_{\text{in}}$. In this case, one can also prove that $u$ is $C^2$ on $\overline{\Omega}_{\text{out}}$ if $\Gamma$ is
$C^\infty$~\cite{champagnat-perrin-talay-14}.

A simple way to get the heuristics behind the probabilistic interpretation is the following. If we assume that the r.h.s.\
of~\eqref{eq:prob-int}, denoted below by $\hat{u}(x)$, is well-defined and smooth, then we can use the following standard technique
for branching processes: distinguish between the different events that may occur between times 0 and $h$, apply the Markov property,
and let $h$ converge to 0. More precisely, let us denote by $Z$ the r.v.\ in the expectation in the r.h.s.\ of~\eqref{eq:prob-int}.
First, we have of course $\hat{u}(x)=h(x)$ for all $x\in\Gamma$. Next, for $x\in\Omega_{\text{out}}$, we can write
\begin{equation*}
  \hat{u}(x) =\mathbb{E}_x\left[\mathbbm{1}_{\{\sigma_\emptyset\leq h\}}\ g(K_\emptyset)\
    \hat{u}(X^\emptyset_{\sigma_\emptyset})^{K_\emptyset}\right] +\mathbb{E}_x\left[\mathbbm{1}_{\{\sigma_\emptyset> h\}}\
    \hat{u}(X^\emptyset_{h})\right].
\end{equation*}
Now, since $x\not\in\Gamma$, the first hitting time $\tau_\Gamma$ of $\Gamma$ by $(x+B^\emptyset_t,t\geq 0)$ satisfies
$\mathbb{P}(\tau_\Gamma\leq h)=o(h)$ at least for smooth $\Gamma$ (it actually decreases exponentially in $1/h$, since $\tau_\Gamma$
is larger than the exit time of a Brownian motion from a fixed ball, the distribution of which is known~\cite {borodin-salminen-02}).
Therefore, except on an event of probability $o(h)$, $\sigma_\emptyset=E_\emptyset$ on the event $\{\sigma_\emptyset\leq h\}$, and
\begin{align*}
  \mathbb{E}_x\left[\mathbbm{1}_{\{\sigma_\emptyset\leq h\}}\ g(K_\emptyset)\
    \hat{u}(X^\emptyset_{\sigma_\emptyset})^{K_\emptyset}\right] & =\mathbb{E}_x\left[\mathbbm{1}_{\{E_\emptyset\leq h\}}\ g(K_\emptyset)\
    \hat{u}(X^\emptyset_{\sigma_\emptyset})^{K_\emptyset}\right]+o(h) \\ & =(1-e^{-\lambda h})\sum_{k\geq 0}p_k g(k) \mathbb{E}[\hat{u}(x+B^\emptyset_{E^{(h)}})^k]+o(h),
\end{align*}
where $E^{(h)}$ is an exponential r.v.\ with parameter $\lambda$, conditioned to be smaller than $h$, independent of $B^\emptyset$.
Note that, in the last equation, we implicitly extended the function $\hat{u}$ as a bounded function on $\mathbb{R}^3$. Under the
assumption that $\hat{u}$ is bounded and continuous, and the power series $\sum_k p_k g(k) x^k$ has an infinite radius of convergence,
it is elementary to prove that
$$
\lim_{h\rightarrow 0}\sum_{k\geq 0}p_k g(k) \mathbb{E}[\hat{u}(x+B_E)^k]=\sum_{k\geq 0}p_k g(k) \hat{u}(x)^k.
$$
Similarly,
\begin{equation*}
  \mathbb{E}_x\left[\mathbbm{1}_{\{\sigma_\emptyset>h\}}\
    \hat{u}(X^\emptyset_{h})\right] =\mathbb{E}_x\left[\mathbbm{1}_{\{E_\emptyset> h\}}\ 
    \hat{u}(x+B^\emptyset_h)\right]+o(h).
\end{equation*}
Since $E^\emptyset$ is independent of $(B^\emptyset_t)$ and $\hat{u}$ has been assumed to be twice continuously differentiable, It\^o's
formula gives
\begin{equation*}
  \mathbb{E}_x\left[\mathbbm{1}_{\{\sigma_\emptyset>h\}}\
    \hat{u}(X^\emptyset_{h})\right] =e^{-\lambda h}\left(\hat{u}(x)+\frac{1}{2}\int_0^h\mathbb{E}[\Delta \hat{u}(x+B^\emptyset_s)]ds\right)+o(h).
\end{equation*}
Again, if $\Delta\hat{u}$ is bounded and continuous, one has
$$
\lim_{h\rightarrow 0}\frac{1}{h}\int_0^h\mathbb{E}[\Delta \hat{u}(x+B^\emptyset_s)]ds=\Delta \hat{u}(x).
$$
Combining all the previous estimates, we obtain
$$
\hat{u}(x)=\lambda h\sum_{k\geq 0}p_k g(k) \hat{u}(x)^k+(1-\lambda h)\left( \hat{u}(x)+h\frac{\Delta \hat{u}(x)}{2}\right)+o(h),
$$
which entails~\eqref{eq:general-PDE} in the limit $h\rightarrow 0$.
\bigskip

The last argument, although intuitive, requires a priori regularity for the function $\hat{u}(x)$. In addition, this method does not
allow to obtain results in cases where the expectation in the right-hand side of~\eqref{eq:prob-int} is not finite. This is why we
prefer to give a new proof extending the classical method of proof of Feynman-Kac formula~\cite{friedman-75} to the case of branching
diffusions. The idea is to compute the semimartingale decomposition of the process $(Y_t,t\geq 0)$.

\begin{proof}[Proof of Theorem~\ref{thm:FK}]
  
It is more convenient to use an equivalent construction of the BBM: let us consider the following independent stochastic
objects:
\begin{itemize}
\item let $(B^v_t,t\geq 0)_{v\in{\cal H}}$ be i.i.d.\ Brownian motions;
\item let $(N^v(dt,dk))_{v\in\mathcal{H}}$ be i.i.d.\ Poisson point measures on $[0,\infty)\times\mathbb{Z}_+$ with intensity measure
  $q(dt,dk)=\lambda\sum_{i\geq 0}p_i \delta_i(dk)dt$, where $\delta_i$ is the Dirac measure at
  the point $i$.
\end{itemize}
The following construction of the BBM amounts to define $E_v$ as the time before the first atom of $N^v$ after $\theta^v$, and $K_v$ as
the second coordinate of this atom. The state of the BBM will be represented as the counting measure
$$
\nu_t=\sum_{v\in\mathcal{H}}\mathbbm{1}_{\{\theta_v\leq t<\sigma_v\}}\delta_{(X^v_t,v)},
$$
constructed as follows: for all bounded function $f:\mathbb{R}^3\times\mathcal{H} \rightarrow \mathbb{R}$, twice continuously
differentiable w.r.t.\ the first variable with uniformly bounded derivatives, the measure $\nu_t$ satisfies
\begin{align*}
  \langle\nu_t,f\rangle &= f(x,\emptyset)+\int_0^t \sum_{v\in\mathcal{H}}\langle\nu_{s-},\nabla f e_v\rangle d
  B^v_s+\frac{1}{2}\int_0^t\sum_{v\in\mathcal{H}}\langle\nu_{s-},\Delta f e_v\rangle ds \\ & +\int_0^t\int_{\mathbb{N}}\sum_{v\in\mathcal{H}}
  \left(\sum_{j=1}^k\langle\nu_{s-},f(\cdot,vj)e_v\rangle-\langle\nu_{s-},fe_v\rangle\right)N^v(ds,dk),
\end{align*}
where $\langle\nu,f\rangle$ stands for $\int_{\mathbb{R}^3\times\mathcal{H}} f(x,v)\nu(dx,dv)$ and
$e_v(x,w)=\mathbbm{1}_{\{x\in\Omega_{\text{out}}\}}\mathbbm{1}_{\{w=v\}}$, so that $\langle\nu_s,e_v\rangle=\mathbbm{1}_{\{\theta_v\leq
  t<\sigma_v\}}$. These equations characterize the measure $\nu_t$ for all $t\geq 0$, and they simply rephrase the algorithmic
construction of the BBM given above.

Introducing the compensated Poisson point measures $\tilde{N}^v(ds,dk)=N^v(ds,dk)-q(ds,dk)$, the last formula immediately gives the
semimartingale decomposition of functionals of $\nu_t$ of the form $\langle\nu_t,f\rangle$:
$$
\langle\nu_t,f\rangle= f(x)+\frac{1}{2}\int_0^t\sum_{v\in\mathcal{H}}\langle\nu_{s-},\Delta f e_v\rangle ds+
\lambda\int_0^t\int_{\mathbb{N}}\sum_{v\in\mathcal{H}} \left(\sum_{k\geq
    0}p_k\sum_{j=1}^k\langle\nu_{s-},f(\cdot,vj)e_v\rangle-\langle\nu_{s-},fe_v\rangle\right)ds +M^f_t,
$$
where $M^f_t$ is the local martingale
$$
M^f_t=\int_0^t \sum_{v\in\mathcal{H}}\langle\nu_{s-},\nabla f e_v\rangle d
  B^v_s+\int_0^t\sum_{v\in\mathcal{H}}
  \left(\sum_{j=1}^k\langle\nu_{s-},f(\cdot,vj)e_v\rangle-\langle\nu_{s-},fe_v\rangle\right)\tilde{N}^v(ds,dk).
$$

Of course, the semimartingale decomposition of other functionals of $(\nu_t,t\in[0,T])$ can be obtained in a similar way. 
Given $x\in\Omega_{\text{out}}$, under $\mathbb{P}_x$, we obtain for the process $Y_t$
\begin{align*}
  Y_t & =u(x)+\int_0^t Y_{s-}\sum_{v\in{\cal H}}\langle\nu_{s-},\frac{\nabla u}{u}e_v\rangle d
  B^v_s+\frac{1}{2}\int_0^t Y_{s-}\sum_{v\in{\cal H}}\langle\nu_{s-},\frac{\Delta
    u}{u}e_v\rangle ds \\ & +\int_0^t\int_{\mathbb{N}} Y_{s-}\sum_{v\in{\cal
      H}}\left(\langle\nu_{s-},u^{k-1}g(k)e_v\rangle-\langle
    \nu_{s-},e_v\rangle\right)N^v(ds,dk) \\ & =u(x)+\int_0^t Y_{s-}\sum_{v\in{\cal H}}
  \langle\nu_{s-},\frac{e_v}{u}\left(\frac{1}{2}\Delta u+\lambda\sum_{k\geq 0}p_k g(k)
    u^{k-1}-\lambda u\right)\rangle ds+M_t \\ & =u(x)+M_t,
\end{align*}
where
$$
M_t=\int_0^t Y_{s-}\sum_{v\in{\cal H}}\langle\nu_{s-},\frac{\nabla u}{u}e_v\rangle d
  B^v_s+\int_0^t\int_{\mathbb{N}} Y_{s-}\sum_{v\in{\cal
      H}}\left(\langle\nu_{s-},u^{k-1}g(k)e_v\rangle-\langle
    \nu_{s-},e_v\rangle\right)\tilde{N}^v(ds,dk)
$$
is a local martingale. Note that in the previous computation, we made the convention that, for
any $t\geq 0$, $v\in{\cal H}$ and $x$ s.t.\ $\nu_t(\{(x,v)\})=1$,
$$
Y_t/u(x)=\prod_{w\in\mathcal{H},\ w\not=v}\left[g(K_w)\mathbbm{1}_{\{\sigma_w\leq t,\ X^w_{\sigma_w}\not\in\Gamma\}}+u(X^w_{\sigma_w\wedge t})
\mathbbm{1}_{\{X^w_{\sigma_w}\in\Gamma\text{\ or\ }\theta_w\leq t<\sigma_w\}}\right].
$$
Since $u$ and $\nabla u$ are bounded functions, we have of course that
$(M_{t\wedge\tau_n},t\geq 0)$ are martingales for all $n\geq 0$, and so
$$
\mathbb{E}(Y_{t\wedge\tau_n})=u(x)
$$
for all $t\geq 0$ and $n\geq 1$. Since $(Y_{t\wedge\tau_n},t\geq 0)$ is uniformly bounded, we
obtain~\eqref{eq:prob-int-stopped}.

In the case where $\sum_{k\geq 0}kp_k\leq 1$, the number of particles in the BBM is smaller than the number of particles in a
sub-critical or critical continuous-time branching process defined as $Z_t=\langle\mu_t,\mathbbm{1}\rangle$, where
$$
\mu_t=\delta_\emptyset+\int_0^t\int_{\mathbb{N}}\sum_{v\in{\cal
    H}}\left(\sum_{j=1}^k\delta_{vj}-\delta_v\right)\mu_{s-}(\{v\})N^v(ds,dk).
$$
Then of course $\mathbb{P}(\tau_n<\infty)\rightarrow 0$ when $n\rightarrow 0$, $Y_\infty$ is
a.s.\ well-defined, and~\eqref{eq:prob-int} is clear if
$\mathbb{E}|Y_\infty|<\infty$.
\end{proof}

\subsubsection{A probabilistic interpretation of the nonlinear Poisson-Boltzmann equation}
\label{sec:first-choice}

The last theorem gives a probabilistic interpretation of PDEs of the form of Poisson-Boltzmann equation (outside the molecule), which
suggests to use again a Monte-Carlo method to estimate $u(x)$.
The difficulty is to find the good probabilistic interpretation allowing to use~(\ref{eq:prob-int}) rather
than~(\ref{eq:prob-int-stopped}) which involves the unknown function.

One possible choice of the function $g$ and the probability distribution $(p_k)_{k\geq 0}$ to recover the PDE~\eqref{eq:PB-ext} is as
follows:
\begin{equation}  \label{eq:choix-param}
\left\{
  \begin{aligned}
    &\lambda=\lambda_0; \\ 
    &g(k)=-1 \text{ for all } k\geq 1,\ g(0)=0; \\
    &p_{2k+1}=\frac{1}{(2k+1)!} \text{ and } p_{2k}=0 \text{ for all } k\geq 1,\ p_1=0; \\
    &p_0=1-\sum_{k\geq 1}p_{2k+1}=2-\sinh 1>0.
  \end{aligned}
  \right.
\end{equation}
With these parameters, we have $\displaystyle
\sum_{k\geq 0}k p_k=\cosh 1-1<1$, so the BBM goes extinct after an a.s.\ finite time.

In this case, Theorem~\ref{thm:FK} gives the following probabilistic interpretation for the Poisson-Boltzmann PDE.
\begin{cor}
  Let $u$ be the solution of the PDE~\eqref{eq:PB-ext}: provided that the expectation is
  well-defined, for all $x\in\Omega_{\rm{out}}$,
  \begin{equation}
    \label{eq:interpr-proba-ext}
    u(x)=\mathbb{E}_x\left\{(-1)^{\#\{v:X^v_{\sigma_v}\not\in\Gamma\}}\prod_{v:\sigma_v\not=\partial}\left[\mathbbm{1}_{\{X^v_{\sigma_v}\not\in\Gamma\}}\
          \mathbbm{1}_{\{K_v\geq 1\}}+\mathbbm{1}_{\{X^v_{\sigma_v}\in\Gamma\}}\ u(X^v_{\sigma_v})\right]\right\},
  \end{equation}
  where the BBM $(X^v_t,t\geq 0, v\in{\cal H})$ has parameters~\eqref{eq:choix-param}.
\end{cor}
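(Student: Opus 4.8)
The plan is to read off the Corollary from Theorem~\ref{thm:FK} by specializing to the parameters~\eqref{eq:choix-param} with boundary data $h=u|_\Gamma$. So first I would check the hypotheses of the theorem, then verify that~\eqref{eq:general-PDE} becomes~\eqref{eq:PB-ext} for this choice, and finally rewrite the product in~\eqref{eq:prob-int} in the claimed form. The hypotheses are immediate: $g$ takes values in $\{0,-1\}$ and is bounded, $h=u|_\Gamma$ is bounded since $u$ is, and (as computed below) the series $\sum_{k\ge0}p_k g(k)x^k=x-\sinh x$ is entire, hence has infinite radius of convergence. Moreover
\[
\sum_{k\ge0}k\,p_k=\sum_{j\ge1}(2j+1)\frac{1}{(2j+1)!}=\sum_{j\ge1}\frac{1}{(2j)!}=\cosh 1-1<1,
\]
so the branching process is subcritical, $\mathbb{P}(\tau_n<\infty)\to0$, and the last part of Theorem~\ref{thm:FK} applies under the stated ``provided the expectation is well-defined'' clause, i.e.\ $\mathbb{E}|Y_\infty|<\infty$.

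For the PDE reduction I would use $g(0)=0$, $g(k)=-1$ for $k\ge1$, and that the only nonzero probabilities are the odd ones $p_{2j+1}=1/(2j+1)!$, $j\ge1$, to get
\[
\sum_{k\ge0}p_k g(k)u^k=-\sum_{j\ge1}\frac{u^{2j+1}}{(2j+1)!}=-(\sinh u-u)=u-\sinh u.
\]
Plugging this and $\lambda=\lambda_0$ into~\eqref{eq:general-PDE}, the two terms $\lambda_0 u$ cancel and what remains is exactly $-\tfrac12\Delta u+\lambda_0\sinh u=0$, which is~\eqref{eq:PB-ext}. Thus the solution $u$ of~\eqref{eq:PB-ext} is a solution of~\eqref{eq:general-PDE} with these parameters, as required to invoke the theorem.

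For the formula reduction I would substitute $g(K_v)=-\mathbbm{1}_{\{K_v\ge1\}}$ and $h=u|_\Gamma$ into~\eqref{eq:prob-int}, so that the factor associated with each $v$ such that $\sigma_v\neq\partial$ reads
\[
-\mathbbm{1}_{\{X^v_{\sigma_v}\notin\Gamma\}}\mathbbm{1}_{\{K_v\ge1\}}+\mathbbm{1}_{\{X^v_{\sigma_v}\in\Gamma\}}\,u(X^v_{\sigma_v}).
\]
Because the events $\{X^v_{\sigma_v}\in\Gamma\}$ and $\{X^v_{\sigma_v}\notin\Gamma\}$ are complementary, each factor equals either $-\mathbbm{1}_{\{K_v\ge1\}}$ or $u(X^v_{\sigma_v})$; extracting one factor $-1$ from every particle dying off $\Gamma$ collects the prefactor $(-1)^{\#\{v:X^v_{\sigma_v}\notin\Gamma\}}$ and leaves the product of the corresponding positive factors, which is precisely~\eqref{eq:interpr-proba-ext}.

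The argument is essentially bookkeeping once the $\sinh$ series identity is noted, so I do not expect a genuine obstacle. The only point requiring care is the sign extraction, where one must use that for each particle exactly one of the two complementary cases occurs, so that the $-1$ factors combine cleanly into the stated power of $-1$; the passage from~\eqref{eq:prob-int-stopped} to~\eqref{eq:prob-int} is already guaranteed by the finiteness hypothesis $\mathbb{E}|Y_\infty|<\infty$ built into Theorem~\ref{thm:FK}.
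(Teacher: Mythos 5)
Your proposal is correct and follows exactly the paper's route: the corollary is obtained there, just as you do, by specializing Theorem~\ref{thm:FK} to the parameters~\eqref{eq:choix-param} with boundary data $h=u|_\Gamma$, noting that $\sum_k p_k g(k)u^k = u-\sinh u$ turns~\eqref{eq:general-PDE} into~\eqref{eq:PB-ext}, that $\sum_k k p_k=\cosh 1-1<1$ gives subcriticality, and that factoring $-1$ out of each off-$\Gamma$ death yields the prefactor $(-1)^{\#\{v:X^v_{\sigma_v}\not\in\Gamma\}}$. Your write-up is in fact slightly more explicit than the paper's, which states the specialization without spelling out these verifications.
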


In view of the last formula, it is clear that the expectation in the right-hand side is finite if $u$ is bounded by 1 on $\Gamma$.
When $u$ takes larger values on $\Gamma$, the product involved in~\eqref{eq:interpr-proba-ext} can be bounded by some exponential
moment of the total number of particles which hit $\Gamma$ in the BBM, but this bound is not finite in general. Therefore, we cannot
ensure in general that the expectation is well-defined with so simple estimates. Note that the random variable inside the expectation
in~\eqref{eq:interpr-proba-ext} is zero when one of the particles in the BBM dies before hitting $\Gamma$ and gives birth to no
children, so one can expect better bounds on the expectation, but such bounds seem very delicate to obtain. We will not study this
question here.

\begin{rk}
  \label{rem:variance}
  In this work, we concentrate on the characterization of the solution of Poisson-Boltzmann equation as the expectation of a random
  variable $Z$ defined from a BBM as in~\eqref{eq:interpr-proba-ext}. We do not provide a complete study of the variance of $Z$,
  which requires a deeper analysis. One can give an idea of the possible difficulties with the previous tools as follows:
  because $Z$ is defined as a product, it can be seen from Theorem~\ref{thm:FK}, using the parameter values~\eqref{eq:choix-param},
  that $v(x)=\mathbb{E}_x(Z^2)$ should be (at least formally) solution to the PDE
  \begin{equation}
    \label{eq:EDP-variance}
    \begin{cases}
      -\frac{1}{2}\Delta v(x)+3\lambda v(x)-\lambda\sinh v(x)=0, \\
      v_{\mid\Gamma}=u^2_{\mid\Gamma}.
    \end{cases}
  \end{equation}
  This elliptic PDE has a non-linear term which is concave where the classical theory would require it to be convex (as in
  Poisson-Boltzmann equation). In particular, this prevents to use classical convexity arguments to characterize the solution of this
  PDE as the solution of a variational problem (see for example~\cite{champagnat-perrin-talay-14} for the application of these
  arguments for Poisson-Boltzmann PDE). This problem will be crucial if the solution takes large values, typically when the boundary
  condition is too large, because then one cannot expect a good approximation of $v$ by the linearization of~\eqref{eq:EDP-variance},
  and the classical arguments cannot ensure that a solution to this PDE exists.
\end{rk}

\subsection{The main steps of the Monte-Carlo algorithm}\label{sec:mainstep}

\subsubsection{Outside the molecule: branching walk on spheres (BWOS) algorithm}
\label{sec:algo}

The walk on spheres (WOS) algorithm of Section~\ref{sec:outside} can be extended in order to deal with possible branching. The
only difference is that the precise location of death of the particle must be simulated to obtain the initial position  of its daughters.

Consider $y\in\Omega_{\text{out}}$ and a radius $R$ such that $B(y,R)\subset\Omega_{\text{out}}$. Consider also a Brownian motion
$(B_t,t\geq 0)$ such that $B_0=y$ and let $\tau_R$ be the first hitting time by $B_t$ of the sphere $\partial B(y,R)$. A WOS
algorithm sampling the system of branching Brownian particles requires to simulate the position of $B_{\tau_R\wedge E}$, where $E$ is
an independent exponential random variable of parameter $\lambda$. Because of the spherical symmetry of the Brownian path, the only
relevant information is the p.d.f.\ of $R_{\tau_R\wedge E}$, where $R_t=|B_t-y|$ is a Bessel process of dimension 3. This
p.d.f.\ can be computed from the formula~\cite[Ch.\:5,\ Formula\:1.1.6]{borodin-salminen-02}
$$
\PP_0\left(\sup_{0<s<E}R_s<R,\ R_E\in dr\right)=\frac{2\lambda r
  \sinh\left[(R-r)\sqrt{2\lambda}\right]}{\sinh\left(R\sqrt{2\lambda}\right)},\quad\forall
0\leq r\leq R,
$$
where, under $\mathbb{P}_0$, $B$ is a standard Brownian motion started from $0$. From this, we obtain for all $0\leq r\leq R$
$$
\PP_0\left(\sup_{0<s<E}R_s<R,\ R_E\leq r\right)=
1-\frac{r\sqrt{2\lambda}\cosh\left[(R-r)\sqrt{2\lambda}\right]+ 
\sinh\left[(R-r)\sqrt{2\lambda}\right]}{\sinh\left(R\sqrt{2\lambda}\right)}.
$$
Note that, by taking $r=R$, we recover the death probability given in the WOS algorithm of Section~\ref{sec:outside}, as expected. 
Hence, we obtain the explicit cumulative distribution function of $R_E$ conditionally on $\{E<\tau_R\}$
\begin{equation}
  \label{eq:cdf-BWOS}
  F(r)=\mathbb{P}\left(R_E\leq r \, | \, \sup_{0<s<R}R_s<R \right)=\frac{\sinh[r\sqrt{2 \lambda}] -r\sqrt{2\lambda}\cosh[(R-r)\sqrt{2\lambda}]- 
    \sinh[(R-r)\sqrt{2\lambda}] }{\sinh[R\sqrt{2 \lambda}] -R \sqrt{2 \lambda}}.
\end{equation}
One can sample from this distribution by several ways, among which we tested acceptance-rejection methods with several proposition
distributions (among which the Beta$(2,2)$ distribution), and Newton's algorithm to invert the cumulative distribution function. It
appears that Newton's algorithm gives a precise sampling and is much faster than acceptance-rejection algorithms, so we use this
method in our code. More precisely, we will denote by $split(y,R)$ the random variable uniformly distributed on the sphere of center
$y$ and radius $r$ obtained by applying Newton's method with 4 iterations to approximate $F^{-1}(U)$, where $U$ is a uniform
r.v.\ on $[0,1]$ and $F$ is given by~\eqref{eq:cdf-BWOS}.

The BWOS algorithm started at $x\in \Omega_{\text{out}}$ returns a random position which belongs to $\Gamma\cup\Omega_{\text{out}}$.
This random variable is an approximation of $B_{\tau\wedge E}$, where $B$ is a standard Brownian motion started from $x$, $\tau$ its
first hitting time of $\Gamma$, and $E$ an independent exponential r.v.\ of parameter $\lambda$.
\medskip

{\tt
\begin{enumerate}[noitemsep]
\item[]{\bf BWOS Algorithm}
\item[] Given $x\in \Omega_{\text{out}}$, $\lambda\geq 0$, and
$\varepsilon >0$
\item Use the WOS algorithm.
\item IF the particule is not alive, THEN
 \begin{enumerate}[noitemsep]
 \item Denote $y$ the last known position of the particle before the death, given by the WOS algorithm,
 \item let $S(y,r)$ be the largest open sphere included in $D$ centered at $y$, 
 \item return an independent copy of $split(y)$.
 \end{enumerate}
\item ELSE return $exit(x)$ simulated by WOS algorithm.
\item[] END. 
\end{enumerate}
}
\medskip

We denote by $split$\_$or$\_$exit(x)$ the position returned by this algorithm.

\subsubsection{Branching Algorithm}
\label{sec:algo2}

The idea of our algorithm is to use the double randomization technique, as in the linear case and as in~\cite{mascagni-simonov-04}, that  consists to use the approximations~\eqref{eq:FK-inside},~\eqref{eq:jump} and~\eqref{eq:interpr-proba-ext} recursively to estimate
the unknown function $u$ appearing on the right-hand side of each of these formulas. 

Therefore, the method consists when the initial particle starts in $\Omega_{\text{in}}$ to use the UWOS algorithm to simulate its
first hitting point of $\Gamma$, or when the initial particle starts in $\Omega_{\text{out}}$ the WOS algorithm, then use a jump
method as in Section~\ref{sec:boundary} to move the particle away from $\Gamma$, and continue to use inductively the UWOS and WOS
algorithm depending whether the particle entered inside the molecule or exited outside the molecule. This part is exactly the same
as in the linear case, and stops when the particle is killed outside the molecule. This time of killing is actually a branching time,
since a random number of new particles, with distribution $(p_k)_{k\geq 0}$, appears at the death position of the initial particle,
and each new particle continues to evolve independently as the first one.

As in the linear case, we can use the \texttt{SNJ} and \texttt{ANJ} jump methods to move a particle after it reaches $\Gamma$.
Therefore, we set $p(x)=p_{\text{SNJ}}(x)$~\eqref{eq:def-p-SNJ} or $p_{\text{ANJ}}(x)$~\eqref{eq:def-p-ANJ} depending on the method
chosen. Because of the non-linearity, as explained in Section~\ref{sec:boundary}, one cannot expect a better precision for the
\texttt{TAJ} method, which is therefore not used here.

One of the difficulties of the algorithm consists in the computation of the global score of the algorithm. Before the first particle is
killed, the score is computed exactly as in the linear case. Then, if this particle has daughters, because
of~\eqref{eq:interpr-proba-ext}, we need to add to the score of the first particle minus the product of the scores of its daughters
(the minus comes from the term $(-1)^{\#\{v:X^v_{\sigma_v}\not\in\Gamma\}}$ in the r.h.s. of~\eqref{eq:interpr-proba-ext}). These
scores might be 0 if the daughter dies before hitting $\Gamma$ and gives birth to no children, or might be different from zero if the
particle hits $\Gamma$. Indeed, in this case, the particle first jumps according to~\eqref{eq:jump} and then scores some value if it
jumps inside the molecule, according to~\eqref{eq:FK-inside}. But its score might also involve the scores of its own daughters if it
is non-zero. Therefore, the convenient way to formulate our algorithm is by a recursive procedure.

Let $M$ be a r.v. distributed according to the offspring distribution of the BBM:
\begin{equation}
  \label{eq:def-K}
  M=
  \begin{cases}
    0 & \mbox{with probability\ }2-\sinh 1 \\
    2k+1 & \mbox{with probability\ }\displaystyle{\frac{1}{(2k+1)!}},\mbox{\ for all\ }k\geq 1.   
  \end{cases}
\end{equation}
Given $x_0\not\in\{c_1,\ldots,c_N\}$, our branching algorithm can be described recursively as follows.

\medskip

{\tt
\begin{enumerate}[noitemsep]
\item[]{\bf BA$(x_0)$ algorithm.}
\item[]Set $k=0$ and $score=u_0(x_0)$ if $x_0\in\Omega_{\text{in}}$ or $score=0$ otherwise
\item IF $x_k\in\Omega_{\text{in}}$
  \begin{enumerate}[noitemsep]
  \item THEN
    \begin{enumerate}
    \item Use the UWOS algorithm to simulate $exit(x_k)$
    \item Set $score =score - u_0(exit(x_k))$
    \item Set $x_{k+1}$ equal to an independent copy of $p(exit(x_k))$
    \end{enumerate}
  \item ELSE
    \begin{enumerate}
    \item Use the BWOS algorithm with $\lambda=\bar{\kappa}^2/2\varepsilon_{\text{out}}$ to simulate $split$\_$or$\_$exit(x_k)$
    \item IF $split$\_$or$\_$exit(x_k)\in\Omega_{\text{out}}$
    \item THEN
      \begin{enumerate}
      \item Sample $m$ as an independent copy of $M$.
      \item Let $score_1,\ldots,score_m$ be the scores returned by $m$ independent runs of the BA$(split$\_$or$\_$exit(x_k))$ algorithm.
      \item Return $score-\prod_{j=1}^m score_j$ if $m\not=0$, or $score$ if $m=0$.
      \item Goto END.
      \end{enumerate}
    \item ELSE set $x_{k+1}$ equal to an independent copy of $p(split$\_$or$\_$exit(x_k))$.
    \end{enumerate}
  \end{enumerate}
\item IF $x_{k+1}\in\Omega_{\text{in}}$, THEN set $score =score +u_0(x_{k+1})$.
\item Set $k=k+1$ and return to Step~(1).
\item[]END.
\end{enumerate}
}
\medskip

As in the linear case, the approximation of $u(x_0)$ is obtained by computing the Monte-Carlo average of the score returned by the
BA$(x_0)$ algorithm.

The convergence of our algorithm could be analyzed following exactly the same lines as Thm. 4.7 of~\cite{bossy-champagnat-al-09},
giving an error of the same order as the SNJ and ANJ algorithms of Section~\ref{sec:boundary}, provided one could ensure that all the
expectations involved in this algorithm are finite. Contrary to the linear case, because of the products involved in Substep~(C)
above, this is not obvious at all, even if the function $u$ is bounded by 1, so we leave aside this question for a future work.

\subsection{Numerical experiments}
\label{sec:cas-test}

\subsubsection{Implementation}

As in the linear case, the localization of the closest atom from the particle is done using CGAL library, and the parallelization of
the code is done using MPI and SPRGN libraries.

Note that, since the number of children of all particles are i.i.d. and independent of their positions of death, the set of particles
simulated in our algorithm and their genealogical relations are the same as those of a Galton-Watson process with reproduction
distribution $(p_k)_{k\geq 0}$, the distribution of the r.v.\ $M$ of~\eqref{eq:def-K}.

Hence, for the practical implementation of the algorithm, we started by sampling first a Galton-Watson tree with offspring
distribution $(p_k)_{k\geq 0}$ conditioned to be smaller than 10 (the probability to have 11 children or more is smaller than
$5\cdot 10^{-8}$). Then, we simulate the trajectory of each particle as above and we follow the genealogical structure of the tree
previously sampled when a particle is killed.

To avoid the use of recursive functions, which can be time consuming, the simulation of the trajectories of each particle and the
scores obtained along each of these trajectories is done first by exploring the Galton-Watson tree forward in time (from the root to
the leaves), whereas the computation of the total score is done in the end by exploring the tree backward in time (from the leaves to the
root).

Since after a branching the score is evaluated as a product of scores of daughters, it is important to detect fast when one of these
scores is zero, which happens for example when one of the daughters dies without children before hitting $\Gamma$. Hence, when
exploring the Galton-Watson tree forward in time, at each birth event, it is more advantageous in terms of computational time to deal
first with daughters particles that have no children.

The fact that the genealogical tree of our particles is sampled before the simulation of the particles' motion allows us to easily
implement a stratification Monte-Carlo algorithm to reduce the variance of the method. Each stratum is a given Galton-Watson
genealogical tree. Its probability of occurrence is easy to compute. We restricted in our simulation to trees of height 2 or less,
because the probability that the Galton-Watson tree has depth 3 or more is less than $1.3\cdot 10^{-4}$. We run 500 trajectories for
each stratum to estimate the variance within each stratum and allocate the number of runs in a given stratum proportionally to the
probability of the stratum times the empirical standard deviation within the stratum.

\subsubsection{The single atom case}

To evaluate the convergence and the efficiency of the previous algorithm, we made some simulations with a monoatomic molecule. Indeed an approximation of the exact solution of the non-linear Poisson-Boltzmann equation~\eqref{eq:nonlinearizedPB} can be
easily computed in this case. Assuming the unique atom is centered at $0$, the solution $u(x)$ to the Poisson-Boltzmann equation
can be written as $v(\|x\|)$ for some function $v$ because of the
spherical symmetry of the problem. Now, $u-u_0$ is harmonic in $B(0,r)$, with a constant Dirichlet boundary condition thanks again to
the spherical symmetry of the problem. Hence $u-u_0$ is constant in $B(0,r)$, so that $v-v_0$ is constant in $[0,r]$, where
$v_0(\|x\|)=u_0(x)$ for all $x\in\mathbb{R}^3$. Using a spherical coordinates  change of variables, the transmission
problem~\eqref{eq:PDE-1},~\eqref{eq:raccordement} and~(\ref{eq:PB-ext}) can be written as
\begin{equation}
  \label{eq:reference-solution-nonlinear}
  v''(x)+\frac{2}{x}v'(x)=\kappa_{\text{out}}^2\sinh v(x)=0,\quad \text{for all }x\in(r,+\infty),  
\end{equation}
with boundary conditions $\varepsilon_{\text{out}}v'(r)=\varepsilon_{\text{in}}v'_0(r)$ and $v(x)\rightarrow 0$ when
$x\rightarrow+\infty$. We approximate this function as the solution of the same differential equation on $[r,R]$ for a large $R$ with
$u(R)=0$.

The numerical results presented here are obtained using the branching algorithm with jump methods \texttt{SNJ} and \texttt{ANJ}
($\alpha=3$ and $10$), taking the same number of Monte-Carlo simulations ($1\cdot 10^5$) for each run, with a value of $h$ varying
from $0.003$ to $0.9$, and with $\epsilon=10^{-5}$. The reference values are computed with $h=0.001$ and $1\cdot 10^6$ Monte-Carlo simulations. We take a radius $r=1\usk\angstrom$ for the atom centered at zero, and we compute the
value of $(u-u_0)(0)$ using our method and compare it to the value $v(r)-v_0(r)$ computed above.

We have tested different values of the charge $z$, and it appears that the variance of the algorithm is very sensitive to this
parameter. As explained above, this can be understood from formula~(\ref{eq:interpr-proba-ext}) when $|v(r)|>1$, since then the value
computed is similar to an exponential moment of the number of individuals in a sub-critical Galton-Watson tree. In practice, above
some threshold on the parameter $z$, the convergence of the algorithm is much slower. A finer numerical analysis reveals that this is
due to very unprobable Galton-Watson trees, with several large numbers of offsprings, which contribute for an important part to the
empirical variance. Because of the large value of the constant in front of the Dirac masses in~\eqref{eq:terme-source}, the threshold
above which the variance starts to increase drastically is slightly larger than $z=1$.

\begin{figure}[ht]
  \centering
  \subfigure[Monte-Carlo average of the 3 jump methods as a function of $h$ (Log
  scale).\label{fig:average_nonlin_1-atom-z=0.2}]{\includegraphics[width=.45\textwidth]{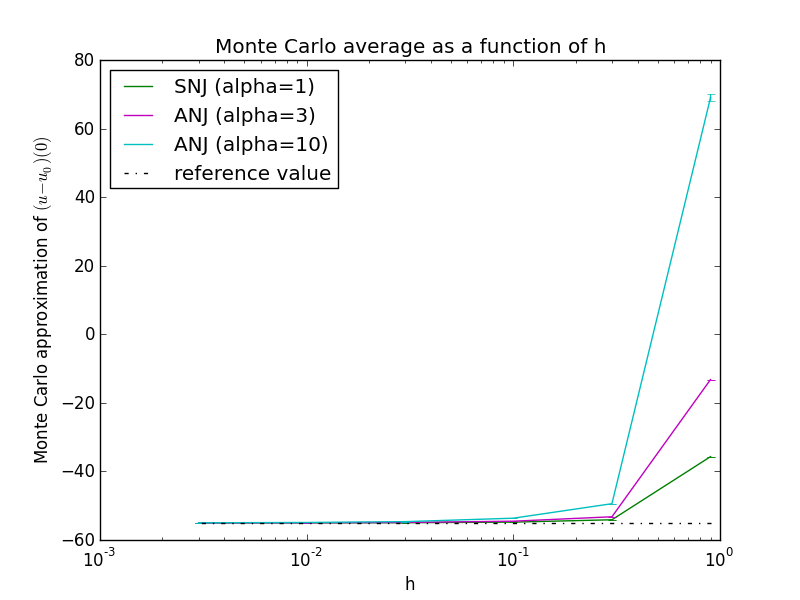}}\quad
  \subfigure[Error (Log scale) of the 3 jump methods w.r.t. the reference value given
  by~\eqref{eq:reference-solution-nonlinear} as a function of $h$ (Log scale).\label{fig:error-nonlin-1-atom-z=0.2}]{
    \includegraphics[width=.45\textwidth]{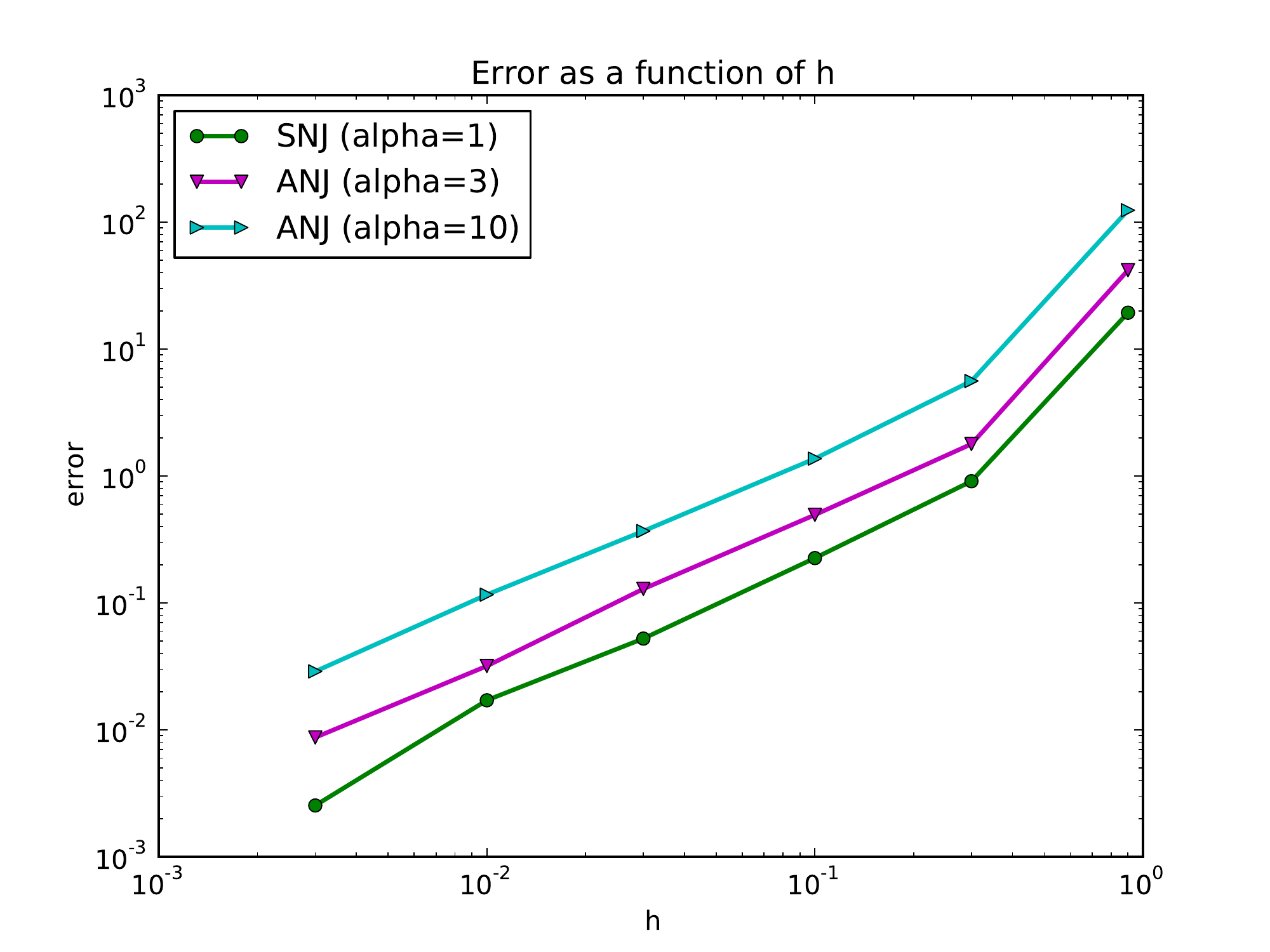}}
  \caption{Convergence and error of the branching algorithm with jump methods \texttt{SNJ} and \texttt{ANJ} ($\alpha=3$ and
    $\alpha=10$) in the case of a single atom with charge $z=0.2$.}\label{fig:nonlin-1-atom-z=0.2}
\end{figure}

This is an important issue of our algorithm, which needs some new ideas to be solved (see the perspectives of Section~\ref{sec:ccl}).
However, for small enough values of $z$, the algorithm behaves very well. We present the numerical results for a much smaller value
$z=0.2$ in Figure~\ref{fig:nonlin-1-atom-z=0.2}, which show a good performance of the algorithm.
Figure~\ref{fig:error-nonlin-1-atom-z=0.2} confirms our conjecture that the error is of the order of $h$. The performance plots in
Figure~\ref{fig:perform-1-atom-z=0.2} indicate that the value of $\alpha$ has a negligible influence on the error of the algorithm for
a given CPU time. Note also that the confidence intervals shown in Figure~\ref{fig:average_nonlin_1-atom-z=0.2} are very small, despite
the relatively small number of Monte-Carlo simulation we used ($3\cdot 10^4$).

\begin{figure}[ht]
  \centering
  \includegraphics[width=.45\textwidth]{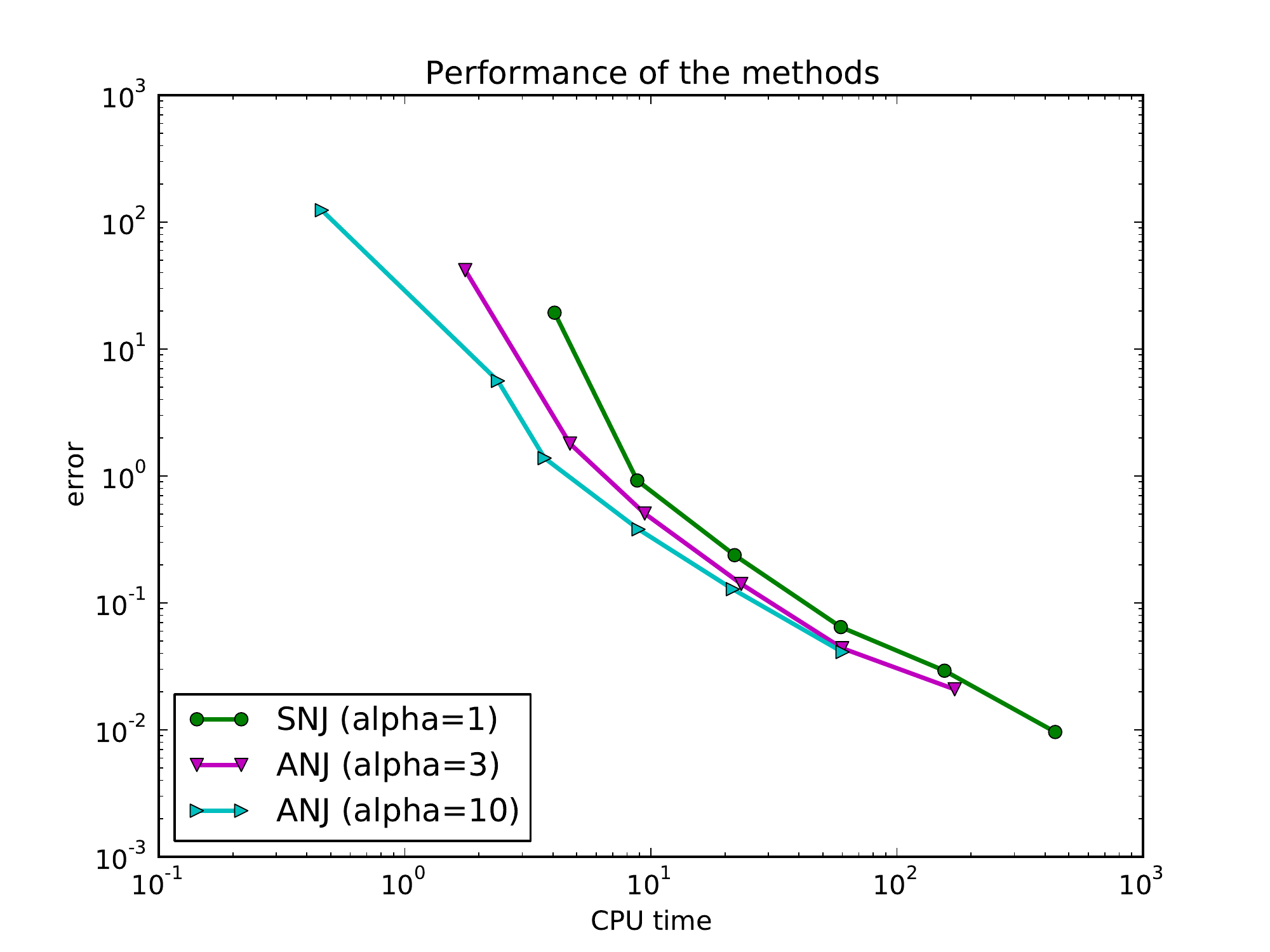}
  \caption{Performance of the branching algorithm with 3 jump methods for a single atom with $z=0.2$: error (Log scale) as a function
    of CPU time (Log scale).}
  \label{fig:perform-1-atom-z=0.2}
\end{figure}

The value $z=1$ (which corresponds to a monoatomic ion of valence $\pm 1$) also has a satisfactory behavior, shown in
Fig~\ref{fig:nonlin-1-atom-z=1}, although the convergence is not as good as for $z=0.2$, as appears in
Figure~\ref{fig:average_nonlin_1-atom-z=1} for $h=0.003$ and in Figure~\ref{fig:error-nonlin-1-atom-z=1}. We also observe a very unstable
behavior of the ANJ method with $\alpha=3$ for large values of $h$. Still, for all values of $h$ between $0.003$ and $0.1$, the
relative error of the algorithm is less than $1$\%. Note that the same simulation, run without stratified Monte-Carlo, gives a very
large variance of the result. According to our tests, the stratification method allowed to increase the threshold of variance
explosion from roughly $z=0.3$ to more than $z=1$.

\begin{figure}[ht]
  \centering
  \subfigure[Monte-Carlo average of the 3 jump methods as a function of $h$ (Log
  scale).\label{fig:average_nonlin_1-atom-z=1}]{\includegraphics[width=.45\textwidth]{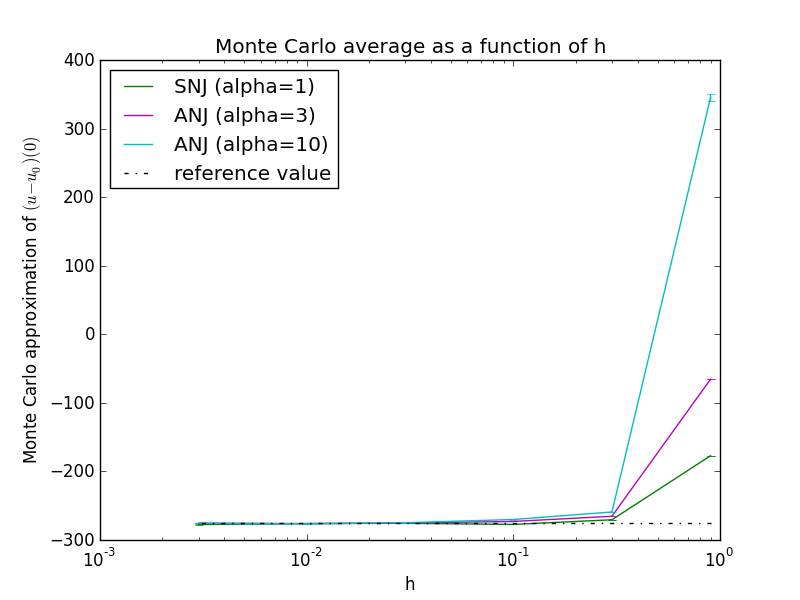}}\quad
  \subfigure[Error (Log scale) of the 3 jump methods w.r.t. the reference value given
  by~\eqref{eq:reference-solution-nonlinear} as a function of $h$ (Log scale).\label{fig:error-nonlin-1-atom-z=1}]{
    \includegraphics[width=.45\textwidth]{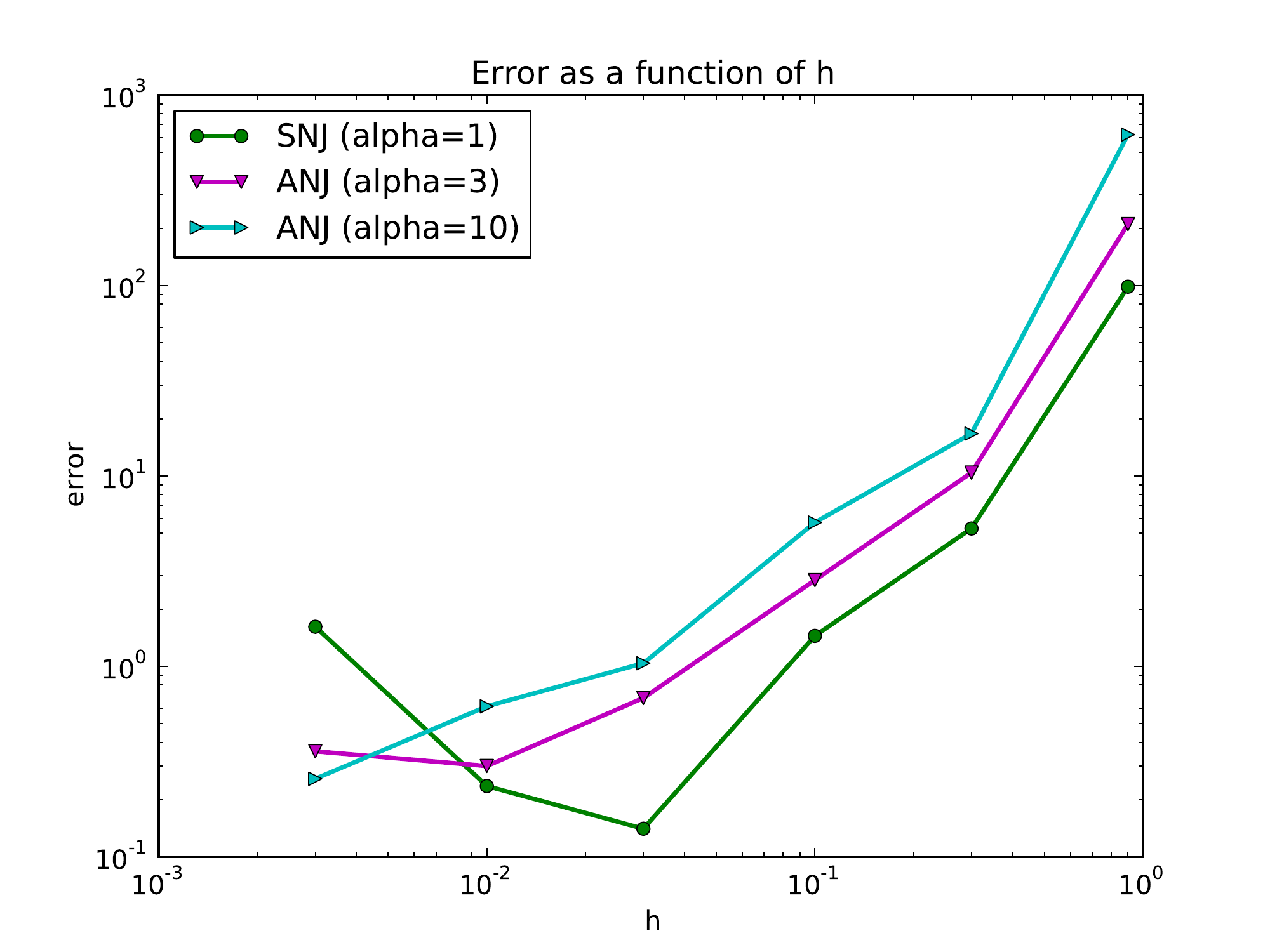}}
  \caption{Convergence and error of the branching algorithm with jump methods \texttt{SNJ} and \texttt{ANJ} ($\alpha=3$ and
    $\alpha=10$) in the case of a single atom with charge $z=1$.}\label{fig:nonlin-1-atom-z=1}
\end{figure}

If one takes a larger value for $z$, the variance starts increasing drastically and the method no longer converges for $h$ small. A
finer analysis of the results of the algorithm shows that the variance is extremely large in a few very unlikely strata,
corresponding to genealogical trees with many children at each generation. Some other runs show a very small empirical variance,
because the very unlikely trajectories with a very large score did not occur by chance.

\subsubsection{The case with two atoms close with opposite charges}

It is generally accepted (cf.\ e.g.~\cite{baker-bashford-al-05}) that for uncharged molecules, the approximation of the non-linear
Poisson-Boltzmann equation by the linear one is not too bad, meaning that the potential $u$ does not take too large values
in $\Omega_{\text{out}}$, and in particular close to $\Gamma$. Since the explosion of the variance observed in the last test case for
large values of $z$ seems to be related to the fact that $u$ takes too large values on $\Gamma$, this suggests to look at uncharged
molecules. The simplest uncharged molecules with non-trivial electrostatic potential $u$ are composed of $N=2$ atoms with opposite
charges. We focus here on this situation, assuming that the two atoms have same radius $r_1=r_2=1\usk\angstrom$ and opposite charges
$z_1=1$ and $z_2=-1$. We denote by $a$ the distance between the centers of the two atoms (in $\angstrom$).

As in the first case, assuming a too large value of $a$ increases the values of $u$ on $\Gamma$ and hence produce variance explosion
of the method. 

\begin{figure}[ht]
  \centering
  \subfigure[Monte-Carlo average for the 3 jump methods as a function of $h$ (Log
  scale).\label{fig:average_nonlin-2-atoms-a=0.2}]{\includegraphics[width=.45\textwidth]{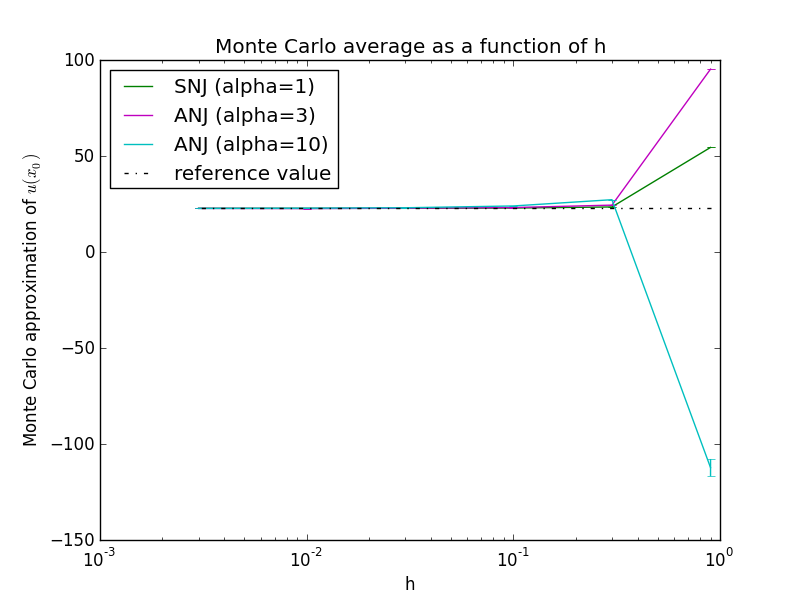}}\quad
  \subfigure[Error (Log scale) of the 3 jump methods as a function of $h$ (Log scale).\label{fig:error-nonlin-2-atoms-a=0.2}]{
    \includegraphics[width=.45\textwidth]{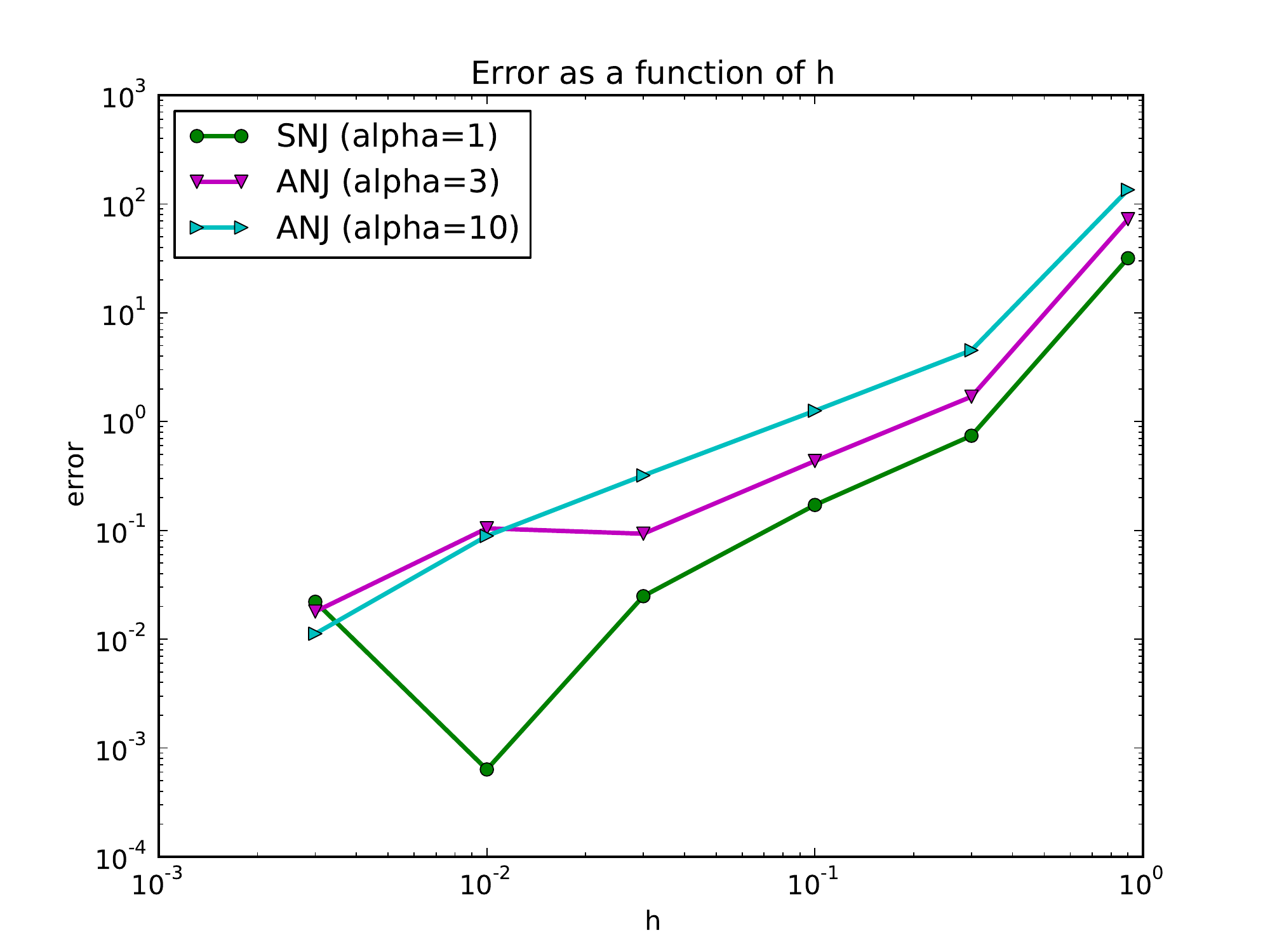}}
  \caption{Convergence and error of the branching algorithm with jump methods \texttt{SNJ} and \texttt{ANJ} ($\alpha=3$ and
    $\alpha=10$) in the case of two atoms with distance $a=0.2$.}\label{fig:nonlin-2-atoms-a=0.2}
\end{figure}

We run similar tests as in the previous example, with $10^5$ Monte-Carlo simulations for several values of $h$ between $0.003$
and $0.9$. We take $\epsilon=10^{-5}$ and we run our branching algorithm to compute $u$ at a point $x_0$ on the line linking the centers of the two atoms, at a
distance $1.5\usk\angstrom$ from the closest center. We used the \texttt{SNJ} jump method, and the \texttt{ANJ} jump method for
$\alpha=3$ and $\alpha=10$.

The reference value involved in the error estimate cannot be computed as above because the spherical symmetry is broken. We could use
APBS to solve the nonlinear Poisson-Boltzmann PDE, but our tests in the linear case show that the adaptive finite element method of
this solver can produce small errors which could prevent us from observing the convergence of our methods. This is why we use a
reference value computed with the \texttt{ANJ} jump method ($\alpha=10$) with a large number of Monte-Carlo simulations ($10^6$),
$h=0.001$ and $\epsilon=10^{-6}$.

We tested several values of $a$. The results are very similar to those obtained in the case of a single atom.

\begin{figure}[ht]
  \centering
  \includegraphics[width=.45\textwidth]{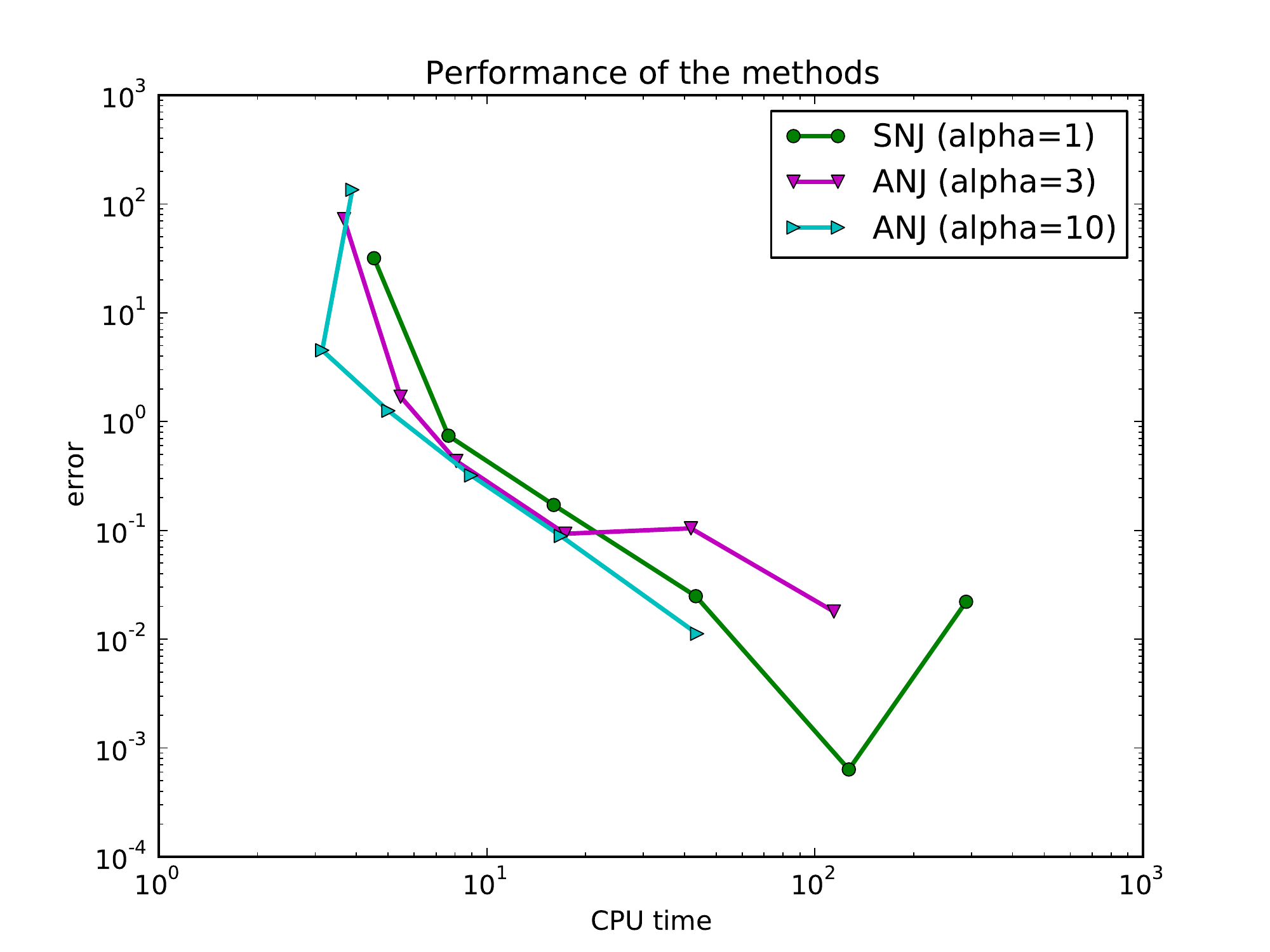}
  \caption{Performance of the branching algorithm with 3 jump methods for two atoms with $a=0.2\usk\angstrom$: error (Log scale) as a
    function of CPU time (Log scale).}
  \label{fig:perform-2-atoms-a=0.2}
\end{figure}

The results for a small value $a=0.2\usk\angstrom$ are shown in Figure~\ref{fig:nonlin-2-atoms-a=0.2}. As in the case of a single atom
for $z=0.2$, the algorithm behaves nicely. We observe as before small confidence intervals and a convergence of the error to 0 at a
speed of the order of $h$. The performance plot of Figure~\ref{fig:perform-2-atoms-a=0.2} shows similar performances for the three
values of $\alpha$, with a slight advantage for the largest value of $\alpha$, which gives the better error for a given CPU time and
an appropriate value of $h$.

The value $a=0.5\usk\angstrom$ also has a relatively good behavior, shown in Figure~\ref{fig:nonlin-2-atoms-a=0.5}, although the
convergence is not as good as for $a=0.2\usk\angstrom$, similarly as for $z=1$ in the case of a single atom. In particular, the
convergence for small $h$ is not as clear as for $a=0.2\usk\angstrom$, but, again, for $h$ between $0.003$ and $0.1$, the relative
error of the method is smaller than $2$\%.

As in the case of a single atom, higher values of $a$ lead to larger values of the variance and the error of the algorithm and the
convergence fails.

\begin{figure}[ht]
  \centering
  \subfigure[Monte-Carlo average of the 3 jump methods as a function of $h$ (Log
  scale).\label{fig:average-nonlin-2-atoms-a=0.5}]{\includegraphics[width=.45\textwidth]{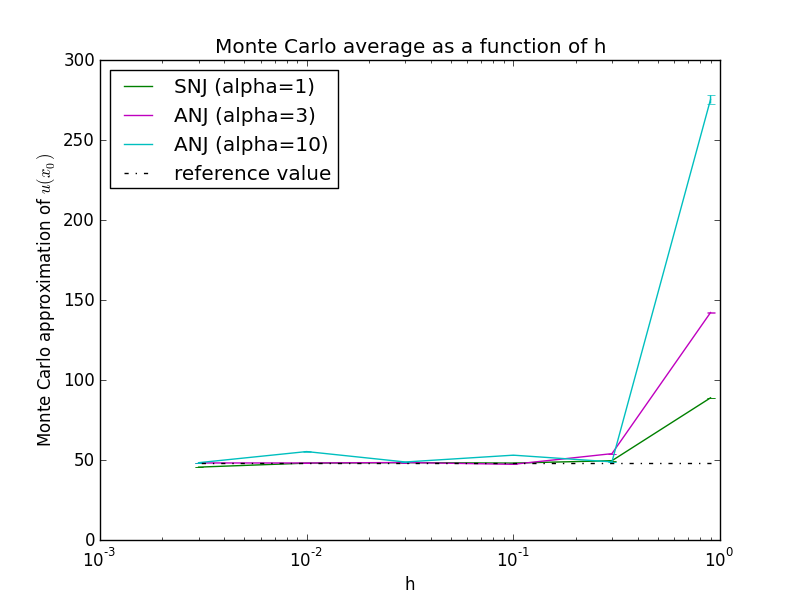}}\quad
  \subfigure[Error (Log scale) of the 3 jump methods as a function of $h$ (Log scale).\label{fig:error-nonlin-2-atoms-a=0.5}]{
    \includegraphics[width=.45\textwidth]{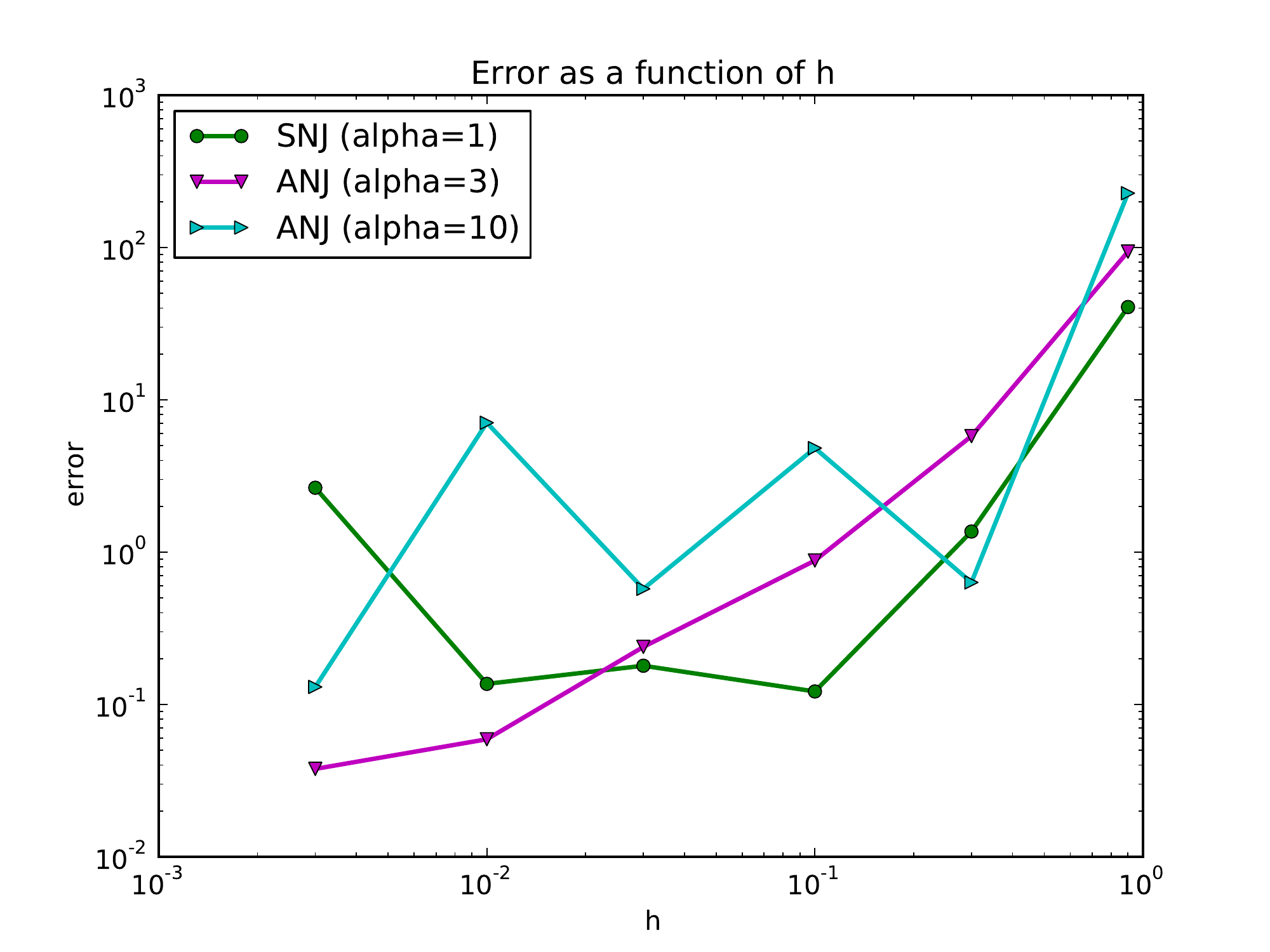}}
  \caption{Convergence and error of the branching algorithm with jump methods \texttt{SNJ} and \texttt{ANJ} ($\alpha=3$ and
    $\alpha=10$) in the case of two atoms with distance $a=0.5$.}\label{fig:nonlin-2-atoms-a=0.5}
\end{figure}

\section{Conclusion and perspectives}
\label{sec:ccl}

Our numerical experiments on the linear case show that the \texttt{TAJ} jump method can be used without significant increase in computational time,
and with a slightly improved expected error. Therefore, it allows to take a larger value of $h$ for a given error threshold, hence actually
reducing the computational time. This is a new argument which, together with those developed in~\cite{mascagni-al-13}, allows to
expect that optimized walk-on-spheres Monte-Carlo solvers for the linear Poisson-Boltzmann equation can be made competitive in terms
of computational time with respect to classical deterministic methods.

Our preliminary tests to solve the nonlinear Poisson-Boltzmann PDE using branching particle systems show that our method has roughly
equivalent performances than the walk-on-spheres solver in the linear case (with \texttt{SNJ} and \texttt{ANJ} methods). However, in
some situations, typically when the electrostatic potential $u$ is large on $\Gamma$, the variance of the method might explode. This
issue requires to develop adequate variance reduction techniques, to be discussed in future work. We have already
tested a stratification technique, which is quite efficient in reducing the variance of the method and makes it converge in cases
where the unstratified algorithm shows variance explosion. However, this method fails again for too large values of $u$ on $\Gamma$.

First, a deeper theoretical analysis of the variance of the algorithm is needed. In particular, a study of the influence of the
parameters and the boundary conditions (see Remark~\ref{rem:variance}) on the variance can give insights on adequate variance
reduction techniques or on appropriate values of the parameters $\lambda,\ g(k),\ p_k$ for the stratified Monte-Carlo method. The
parameter values \eqref{eq:choix-param} proposed in Section~\ref{sec:first-choice} are one possible choice, but other possibilities might be considered and tuned in
order to optimize the variance.

Additional variance reduction techniques have to be explored. For example, one could try to reduce the variance of the score within
each stratum. To undertake this, we can analyze the probabilistic interpretation~\eqref{eq:interpr-proba-ext}. Let us denote by $X$
the r.v. inside the expectation in the r.h.s. of this equation. $X$ might be 0 if one of the particles dies without children before
leaving $\Omega_{\text{out}}$, or might be a product of values of $u$ on $\Gamma$ if all the particles hit $\Gamma$ before dying. If
$u$ takes large values on $\Gamma$, this product might be very large, and hence the variance of $X$ is very large. One could reduce
this variance by reducing the probability that $X=0$. This could be done using importance sampling techniques, for example by adding
to the Brownian motion of each particle a drift towards the center of the molecule.

We can also study pruning techniques in the spirit of~\cite{blomker-al-07}, where pruning of genealogical trees of branching particle
systems is used to study the probabilistic interpretation of the Fourier transform of Navier-Stokes equation.

\medskip\medskip\noindent{\large \sc Acknowledgments. }\\
\indent The authors thank  S{\'e}lim Kraria (from DREAM, INRIA Sophia Antipolis M{\'e}diterran{\'e}e)  and Pierre Navarro (from IRMA,
Universit{\'e} de Strasbourg) for their precious help regarding the software development aspect.

This work was granted access to the HPC resources of Aix-Marseille Universit{\'e} financed by the project Equip@Meso (ANR-10-EQPX-29-01)
of the program ``Investissements d'Avenir'' supervised by the {Agence Nationale pour la Recherche}.

\appendix
\section*{Appendix}

\section{Values of constants}\label{appendixA}

The following table gives the values of the  constants involved in the Poisson-Boltzmann PDEs we consider in this work. 
\begin{table}[h!]
\centering
\begin{tabular}{|l|c|l|}
\hline
 & symbol & value \\
\hline
Boltzmann constant & $k_B$ & $1,3806488 \times 10^{-23}\usk\squaren\meter\usk\kilogram\usk\rpsquare\second\usk\reciprocal\kelvin$ \\ 
\hline
Charge of an electron & $e_c$& $1,602176565\times10^{-19}\usk\coulomb$\\
\hline
Temperature & $T$ & $298 \usk\kelvin$\\
\hline
Vacuum permittivity & $\varepsilon_0$ & $ 8,854187817 \times  10^{-12}\usk\farad\usk\reciprocal\meter$\\
\hline
Avogadro constant & $\mathcal{N}_A$ & $6,02214129 \times 10^{23} \usk\reciprocal\mole$\\
\hline
Molecule relative permittivity & $\varepsilon_{\text{in}}$ & 2$\phantom{0}$ $\qquad$ (dimensionless)\\
\hline
Solvent relative permittivity & $\varepsilon_{\text{out}}$ & 80  $\qquad$ (dimensionless)\\
\hline
Solvent ion concentration & $c$ & $1\usk\mole\usk\reciprocal\liter$\\
\hline
Solvent ion relative charge & $z$ & $\pm 1\,\qquad$ (dimensionless)\\
\hline
Inverse Debye length~\eqref{eq:kappa} & $\bar\kappa$ & $2.9132\usk\reciprocal\angstrom$ \\\hline
\end{tabular}
\caption{The physical constants  used  in the simulations in the the International System of Units.}
\end{table}

\bibliographystyle{plain}
\def\cprime{$'$} \def\cprime{$'$} \def\cprime{$'$}

\end{document}